\newcommand{\mA}{\ensuremath{\mathcal{A}}}
\newcommand{\mD}{\ensuremath{\mathcal{D}}}
\newcommand{\mB}{\ensuremath{\mathcal{B}}}
\newcommand{\mR}{\ensuremath{\mathcal{R}}}
\newcommand{\mS}{\ensuremath{\mathcal{S}}}
\newcommand{\mN}{\ensuremath{\mathcal{N}}}
\newcommand{\mM}{\ensuremath{\mathcal{M}}}
\newcommand{\mV}{\ensuremath{\mathcal{V}}}
\newcommand{\sing}{\ensuremath{\mathcal{V}}}
\newcommand{\C}{\mathbb{C}}
\newcommand{\Z}{\mathbb{Z}}
\newcommand{\R}{\mathbb{R}}
\newcommand{\N}{\mathbb{N}}
\newcommand{\bi}{\ensuremath{\mathbf{i}}}
\newcommand{\ww}{\ensuremath{\mathbf{w}}}
\newcommand{\bw}{\ensuremath{\mathbf{w}}}
\newcommand{\xx}{\ensuremath{\mathbf{x}}}
\newcommand{\bz}{\ensuremath{\mathbf{z}}}
\newcommand{\zz}{\ensuremath{\mathbf{z}}}
\newcommand{\one}{\ensuremath{\mathbf{1}}}
\newcommand{\oS}{\ensuremath{\overline{S}}}
\newcommand{\ow}{\ensuremath{\overline{w}}}
\newcommand{\oz}{\ensuremath{\overline{z}}}
\newcommand{\grad}{\ensuremath{\nabla}}
\newcommand{\bzht}[1]{\bz_{\hat{#1}}}
\newcommand{\htbw}{\widehat{\bw}}
\newcommand{\htbz}{\widehat{\bz}}
\newcommand{\bsigma}{\ensuremath{\boldsymbol \sigma}}
\newcommand{\brho}{\ensuremath{\boldsymbol \rho}}
\newcommand{\btau}{\ensuremath{\boldsymbol \tau}}
\newcommand{\bzer}{\ensuremath{\mathbf{0}}}
\newcommand{\btheta}{\ensuremath{\boldsymbol{\theta}}}
\newcommand{\bone}{\ensuremath{\mathbf{1}}}
\def\testb#1{\testb@i#1,,\@nil}%
\def\testb@i#1,#2,#3\@nil{%
  \draw[->, thick] (O) --++(#1);
  \ifx\relax#2\relax\else\testb@i#2,#3\@nil\fi}
\newcommand{\makediag}[1]{
    \coordinate (O) at (0,0); \coordinate (N) at (0,0.8); \coordinate (N2) at (0,0.7);
    \coordinate (NE) at (0.8,0.8); \coordinate (E) at (0.8,0);
    \coordinate (SE) at (0.8,-0.8); \coordinate (S) at (0,-0.8);
    \coordinate (SW) at (-0.8,-0.8);\coordinate (W) at (-0.8,0);
    \coordinate (NW) at (-0.8,0.8); \coordinate (B1) at (1.2,1.2);
    \coordinate (B2) at (-1.2,-1.2);
    \testb{#1}
} 
\newcommand{\diagr}[1]{
  \begin{tikzpicture}[scale=0.8]\makediag{#1}\end{tikzpicture}
}
\renewcommand{\exp}[1]{\operatorname{exp} \left[ #1 \right]}
\newtheorem{theorem}{Theorem}
\newtheorem{proposition}[theorem]{Proposition}
\newtheorem{lemma}[theorem]{Lemma}
\theoremstyle{definition}
\theoremstyle{remark}
\newtheorem{rem}[theorem]{Remark}
\newenvironment{example}
  {\pushQED{\qed}\examplex}
  {\popQED\endexamplex}
\title{Completing the Asymptotic Classification of Mostly Symmetric Short Step Walks in an Orthant}
\author{Alexander Kroitor and Stephen Melczer}
\date{}
\begin{document}

\maketitle

\begin{abstract}
In recent years, the techniques of analytic combinatorics in several variables (ACSV) have been applied to determine asymptotics for several families of lattice path models restricted to the orthant $\N^d$ and defined by step sets $\mS\subset\{-1,0,1\}^d\setminus\{\mathbf{0}\}$. Using the theory of ACSV for smooth singular sets, Melczer and Mishna determined asymptotics for the number of walks in any model whose set of steps $\mS$ is `highly symmetric' (symmetric over every axis). Building on this work, Melczer and Wilson determined asymptotics for all models where $\mS$ is `mostly symmetric' (symmetric over all but one axis) \emph{except} for models whose set of steps have a vector sum of zero but are not highly symmetric. In this paper we complete the asymptotic classification of the mostly symmetric case by analyzing a family of saddle-point-like integrals whose amplitudes are singular near their saddle points. 
\end{abstract}

Lattice path enumeration is a classical problem in enumerative combinatorics. Given a (typically finite) set of steps $\mS \subset \Z^d$, a \emph{restricting region} $\mR \subset \Z^d$, a set of \emph{starting positions} $\mA \subset \mR$, and a set of \emph{ending positions} $\mB \subset \mR$, the goal is to enumerate the number of walks taking $n$ steps in $\mS$ that start in $\mA$, end at $\mB$, and always stay in $\mR$. Lattice path enumeration has a long history in combinatorics and probability theory (see, for instance, the textbooks~\cite{Mohanty1979,Narayana1979,KrattenthalerMohanty2003} and the survey~\cite{Humphreys2010}), and finds application to a wide variety of problems in combinatorics and broader fields. There are a vast number of approaches to the enumeration of walks in convex cones: a full accounting would take up its own survey paper, but to illustrate the breadth of work on the subject we note (in addition to the analytic viewpoint of this paper) techniques involving computer algebra~\cite{BostanKauers2009,KauersKoutschanZeilberger2009,BostanRaschelSalvy2014,BostanChyzakHoeijKauersPech2017}, differential Galois theory~\cite{DreyfusHardouin2021,DreyfusHardouinRoquesSinger2021}, potential theory~\cite{DArcoLacivitaMustapha2016}, boundary value problems on Riemann surfaces~\cite{Raschel2012}, probabilistic methods~\cite{FayolleIasnogorodskiMalyshev2017,DenisovWachtel2015}, and elegant power series manipulations~\cite{Bousquet-Melou2016,BernardiBousquet-MelouRaschel2021}. More recent work has also studied walks in non-convex cones~\cite{RaschelTrotignon2019, DreyfusTrotignon2021, Price2022, Trotignon2022, Bousquet-Melou2023}.

Much attention in the enumeration literature has focused on walks restricted to an orthant $\mR=\N^d$. Using the \emph{kernel method}, an enumerative technique used to manipulate functional equations satisfied by lattice path generating functions, it is often possible to represent generating functions enumerating walk models as \emph{diagonal extractions} of explicit multivariate series (see~\cite[Section 4.1]{Melczer2021} for an overview of the kernel method, and~\cite[Section 4.2]{Melczer2021} for a historical perspective). Diagonal extractions can be combined with the theory of \emph{Analytic Combinatorics in Several Variables} (ACSV)~\cite{Melczer2021,PemantleWilsonMelczer2024} to determine asymptotics for lattice path models. Given a $(d+1)$-variate power series\footnote{Throughout this paper we use the multi-index notation $\bz^{\bi} = z_1^{i_1}\cdots z_d^{i_d}$.}
\[ F(\zz,t) = \sum_{(\bi,t) \in \N^{d+1}} f_\bi\bz^\bi t^n, \]
the \emph{(main) diagonal} of $F$ is the univariate series
\[ (\Delta F)(t) = \sum_{n \geq 0}f_{n,\dots,n}t^n \]
defined by the coefficients in $F(\zz,t)$ where all exponents are equal. The techniques of ACSV show how the analytic behaviour of $F$ near its singular set describes the asymptotic behaviour of its diagonal sequence. In this paper we represent the generating functions for a family of lattice path models as diagonal extractions of multivariate rational functions, then use ACSV to determine asymptotics.

\section{Models with Highly and Mostly Symmetric Step Sets}

Fix a dimension $d\in\mathbb{N}$ and a step set $\mS \subset \{\pm1,0\}^d \setminus \{\bzer\}$. Walk models whose steps have coordinates equal to $0$ or $\pm1$ are known as \emph{short step models}. To rule out redundant cases, we assume that for each $1 \leq j \leq d$ there is a step in $\mS$ moving forward in the $j$th coordinate, and a step in $\mS$ moving backwards in the $j$th coordinate. We also allow our steps to have positive weights, and define the \emph{weight} of a lattice walk to be the product of the weights of its steps. The \emph{(weighted) characteristic polynomial} of $\mS$ is the Laurent polynomial
\[ S(\bz) = \sum_{\bi \in \mS} w_{\bi} \bz^\bi \]
whose exponents encode the entries in $\mS$, where each $w_{\bi}$ is a positive real weight. Define the notation
\[ \bz_{\hat{k}} = (z_1,\dots,z_{k-1},z_{k+1},\dots,z_d) \]
for any $1 \leq k \leq d$ and $\htbz = \bz_{\hat{d}} = (z_1,\dots,z_{d-1})$. Furthermore, following convention in the lattice path literature, let $\overline{v}=1/v$ for any variable $v$ (we do not refer to complex conjugation in this paper).

\begin{rem} 
In the \emph{unweighted} case when each weight has the value 1, the value $S(\bone) = |\mS|$ is the cardinality of the step set and the total weight of walks of length $n$ equals the number of paths of length $n$. When each weight is a positive integer then we can imagine counting paths where there are (potentially) multiple copies of each step. 
\end{rem}

We say $\mS$ is \emph{symmetric} over a coordinate axis if $\mS$ is unchanged by reflection over the axis \emph{and} the weight of any step equals the weight of the step obtained by reflecting over the axis. In this paper we restrict to the cases where $\mS$ is either symmetric over every coordinate axis or all but one axis. We may assume without loss of generality that the axis of non-symmetry (if it exists) is $z_d$, so that
\[ S(\bz) = \oz_d A\left(\htbz\right) + Q\left(\htbz\right) + z_d B\left(\htbz\right)   \]
for Laurent polynomials $A,B,$ and $Q$ that are symmetric in their variables. For all $1\leq k \leq d$ let $b_k$ be the total weight 
\[ b_k = \sum_{\substack{\bi \in \mS \\[+0.5mm] i_k=1}} w_{\bi}  \]
of the steps moving forward in the $k$th coordinate.

\begin{theorem}[{Highly Symmetric Asymptotics~\cite[Theorem 3.4]{MelczerMishna2016}}] \label{thm:MeMiAsm}
Let $\mS \subset \{-1,0,1\}^d \setminus \{\mathbf{0}\}$ be a set of steps that is symmetric over every axis and moves forwards and backwards in each coordinate. Then the total weight~$s_n$ of walks of length~$n$ taking steps in~$\mS$, beginning at the origin, and never leaving $\N^d$ satisfies
\[
s_n  = S(\one)^n \cdot n^{-d/2} \cdot  \left[\, \left(\frac{S(\one)}{\pi}\right)^{d/2} \frac{1}{\sqrt{b_1 \cdots b_d}} + O\left( \frac{1}{n} \right)\, \right].
\]
\end{theorem}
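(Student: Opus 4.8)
The plan is to realise the length generating function $\sum_{n\ge0}s_n t^n$ as the diagonal of an explicit rational function and then apply the smooth-point transfer theorems of ACSV (see~\cite{MelczerMishna2016,Melczer2021}). First, since $\mS$ is symmetric over every coordinate axis, $S(\bz)$ is fixed by each reflection $z_j\mapsto\overline{z_j}$, and the classical reflection/kernel argument for such ``reflectable'' models writes the complete generating function $Q(\bz,t)=\sum_{\bi,n}q_{\bi,n}\bz^{\bi}t^n$ counting walks from the origin by endpoint so that $z_1\cdots z_d\,Q(\bz,t)$ is the sum of those monomials of $\frac{\prod_{j=1}^d(z_j-\overline{z_j})}{1-t\,S(\bz)}$ all of whose exponents are at least $1$. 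I would then set $\bz=\one$: the positive-part extraction becomes a constant-term extraction against the kernel $\prod_j\frac{1}{z_j-1}$, the identity $\frac{z_j-\overline{z_j}}{z_j-1}=1+\overline{z_j}$ absorbs this kernel, and the substitution $z_j\mapsto\overline{z_j}$ (legitimate because $S(\overline{z_1},\dots,\overline{z_d})=S(\bz)$) converts the constant term into a diagonal, giving
\[
\sum_{n\ge0}s_n t^n=Q(\one,t)=\Delta\!\left[\frac{\prod_{j=1}^d(1+z_j)}{1-t\,z_1\cdots z_d\,S(\bz)}\right]=:\Delta\,\frac{G(\bz,t)}{H(\bz,t)}.
\]
Because every step lies in $\{-1,0,1\}^d$, the product $z_1\cdots z_d\,S(\bz)=\sum_{\bi\in\mS}w_{\bi}\prod_jz_j^{1+i_j}$ is an honest polynomial with nonnegative exponents, so $G/H$ is rational, $H$ does not vanish at the origin, and $G/H$ has nonnegative Taylor coefficients.

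Next I would locate the minimal critical point of $\Delta(G/H)$. The critical-point equations for the main diagonal are $H=0$ together with $z_j\partial_{z_j}H=t\partial_tH$ for $1\le j\le d$; away from the coordinate hyperplanes these reduce to $z_j\partial_{z_j}S(\bz)=0$, and since $S$ depends on $z_j$ only through $z_j+\overline{z_j}$ this forces $z_j=\pm1$, after which $H=0$ pins $t=1/(z_1\cdots z_d\,S(\bz))$. The point I expect to govern the asymptotics is $\ww=(\one,1/S(\one))$: it lies on $\mV(H)$, it is smooth there since $\partial_tH(\ww)=-S(\one)\ne0$, and it is minimal because, writing $z_1\cdots z_d\,S(\bz)$ as a polynomial with all exponents in $\{0,1,2\}$ and all (positive) coefficients summing to $S(\one)$, one gets $|t\,z_1\cdots z_d\,S(\bz)|\le1$ on the closed polydisc $\{|z_j|\le1\}\times\{|t|\le1/S(\one)\}$, so $H$ has no zero in its interior. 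On the bounding torus the only further zeros of $H$ occur at points where some $z_j=-1$ (forced, since $\mS$ moves both forward and backward in each coordinate, so that $z_j^2=1$), and there the factor $\prod_j(1+z_j)$ of $G$ vanishes; hence $\ww$ is the unique \emph{contributing} minimal critical point. I expect this step to be the main obstacle requiring care: it is where the periodicity of the walk must be confronted and dispatched, using the combinatorial minimal-point criteria of ACSV together with the vanishing of the numerator at the competing torus points.

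Finally I would apply the smooth-point transfer theorem in the $d+1$ variables $(\bz,t)$, which gives
\[
s_n=S(\one)^n\,n^{-d/2}\left(\frac{-G(\ww)}{w_{d+1}\,\partial_tH(\ww)}\cdot\frac{1}{\sqrt{(2\pi)^d\,\det\mH(\ww)}}+O\!\left(\frac1n\right)\right),
\]
where $\mH(\ww)$ is the Hessian of the phase in the associated saddle-point integral. Two evaluations remain: $\frac{-G(\ww)}{w_{d+1}\,\partial_tH(\ww)}=\frac{-2^d}{(1/S(\one))(-S(\one))}=2^d$; and, since $\mV(H)$ is cut out locally by $z_1\cdots z_d\,t=1/S(\bz)$, the phase is $\btheta\mapsto\log S(e^{i\theta_1},\dots,e^{i\theta_d})$, whose Hessian at $\btheta=\bzer$ is diagonal—the axis symmetry annihilates the mixed second derivatives—with $j$th entry $\frac1{S(\one)}\sum_{\bi\in\mS}w_{\bi}\,i_j^2=\frac{2b_j}{S(\one)}$, using that the forward and backward $j$-weights each equal $b_j$. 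Thus $\det\mH(\ww)=2^d\,b_1\cdots b_d/S(\one)^d\neq0$, and substituting yields exactly
\[
s_n=S(\one)^n\cdot n^{-d/2}\cdot\left[\left(\frac{S(\one)}{\pi}\right)^{d/2}\frac{1}{\sqrt{b_1\cdots b_d}}+O\!\left(\frac1n\right)\right],
\]
the error being the next order of the standard smooth-point expansion, which proceeds in integer powers of $1/n$. Besides the minimality analysis above, the only other delicate calculation is this Hessian evaluation, where the axis symmetry must be used to diagonalise $\mH(\ww)$ and the identity $\sum_{\bi\in\mS}w_{\bi}\,i_j^2=2b_j$ invoked to produce the precise constant.
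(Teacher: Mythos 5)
Your proposal is correct and follows essentially the same route as the paper (which cites Melczer--Mishna and sketches the argument in Example~\ref{ex:smooth}): derive the diagonal representation~\eqref{eq:highlyDiag} via the reflection/kernel method, identify $\ww=(\one,1/S(\one))$ as the unique contributing minimal critical point by combining the triangle-inequality minimality argument with the vanishing of $\prod_j(1+z_j)$ at competing torus points with some $z_j=-1$, and apply the smooth-point transfer theorem with the diagonal Hessian having entries $2b_j/S(\one)$. The arithmetic $2^d/\sqrt{(2\pi)^d\cdot 2^d b_1\cdots b_d/S(\one)^d} = (S(\one)/\pi)^{d/2}(b_1\cdots b_d)^{-1/2}$ checks out and reproduces the stated constant.
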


Theorem~\ref{thm:MeMiAsm} is obtained by applying the techniques of ACSV to the diagonal expression
\begin{equation} 
W(t) = \Delta \left(\frac{G(\bz,t)}{H(\bz,t)}\right) = \Delta\left(\frac{(1+z_1)\cdots(1+z_d)}{1 - t(z_1\cdots z_d)S(\bz)}\right),
\label{eq:highlyDiag}
\end{equation}
where $W(t)$ is the generating function enumerating walks in the model defined by $\mS$. We note that the singular set $\mV$ of the rational function $G/H$ are the zeroes of the polynomial $H(\zz,t) = 1 - t(z_1\cdots z_d)S(\bz)$. Because this polynomial and its partial derivative with respect to $t$ never simultaneously vanish, $\mV$ forms a manifold and only the results of \emph{smooth} ACSV (the simplest case) are needed.

Asymptotics in the mostly symmetric case depends on the \emph{drift} $B(\bone) - A(\bone)$ of a walk with respect to the $z_d$-axis, which is the weight of the steps in $\mS$ with positive $z_d$ coordinate minus the weight of the steps in $\mS$ with negative $z_d$ coordinate. In the positive drift case, the number of walks of length $n$ satisfies a formula similar to the highly symmetric case.

\begin{theorem}[{Positive Drift Asymptotics~\cite[Theorem 2.1]{MelczerWilson2019}}]
\label{thm:PosAsm} 
Let $\mS$ be a step set that is symmetric over all but the $d$th axis and takes a step forwards and backwards in each coordinate.  If $\mS$ has positive drift then the total weight $s_n$ of walks of length $n$ taking steps in $\mS$, starting at the origin, and never leaving $\N^d$ satisfies
\[
s_n = S(\bone)^n \cdot n^{-(d-1)/2} \cdot \left[\left(\frac{S(\bone)}{\pi}\right)^{\frac{d-1}{2}} \frac{B(\one) - A(\bone)}{B(\one)\sqrt{b_1 \cdots b_{d-1}}} + O\left(\frac{1}{n}\right)\right]. 
\]
\end{theorem}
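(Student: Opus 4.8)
The plan is to realize $W(t)$ as the diagonal of a rational function in which the asymmetric $z_d$-direction has been resolved, to check that the resulting critical point is a smooth point of the singular variety, and then to apply smooth ACSV as in the proof of Theorem~\ref{thm:MeMiAsm}. I would begin by running the kernel method on the functional equation for $W(\bz,t)=\sum_{\bn,n}w_{\bn,n}\bz^{\bn}t^n$ (see \cite[Section~4.1]{Melczer2021}). Summing over the orbit of the group of $2^{d-1}$ sign changes $z_j\mapsto\bar z_j$, $1\le j\le d-1$, cancels the boundary contributions coming from the first $d-1$ faces of $\N^d$, and the remaining boundary along $z_d=0$ is cleared, exactly as in one dimension, by introducing the quadratic ``$z_d$-kernel''
\[
K(u;\htbz,t)=-tB(\htbz)\,u^{2}+\bigl(1-tQ(\htbz)\bigr)u-tA(\htbz)
\]
and substituting its small root $\zeta(\htbz,t)$, i.e.\ the root analytic at $t=0$ with $\zeta(\htbz,0)=0$. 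After bookkeeping this expresses $W(t)$ as the diagonal of a rational function of the $d$ variables $z_1,\dots,z_{d-1},t$ whose denominator is essentially $A(\htbz)-B(\htbz)\,\zeta(\htbz,t)$ and whose numerator carries $\prod_{j=1}^{d-1}(1+z_j)$ together with $\zeta(\htbz,t)$ (the degree-two algebraic function $\zeta$ being absorbed into a genuinely rational presentation, as in Melczer--Wilson~\cite{MelczerWilson2019}). As a sanity check, in dimension one the representation collapses to $W(t)=\zeta(t)/\bigl(t(A-B\zeta(t))\bigr)$, whose unique dominant singularity is a simple pole at $A=B\zeta(t)$ --- that is, at $t_0:=1/S(1)$ when $B>A$ --- with residue yielding exactly $s_n\sim\tfrac{B-A}{B}\,S(1)^n$.

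Next I would locate the dominant singularity. The candidate is $(\htbz,t)=(\bone,t_0)$ with $t_0=1/S(\bone)$: there $u=1$ is a root of $K(\,\cdot\,;\bone,t_0)$, and the positive-drift hypothesis $B(\bone)>A(\bone)$ makes $1$ the \emph{large} root of $K$, so that $\zeta(\bone,t_0)=A(\bone)/B(\bone)$ and the denominator vanishes. The same hypothesis gives $S(\bone)=A(\bone)+Q(\bone)+B(\bone)>Q(\bone)+2\sqrt{A(\bone)B(\bone)}$, which forces the branch points of $\zeta$ --- where the discriminant $(1-tQ(\htbz))^{2}-4t^{2}A(\htbz)B(\htbz)$ of $K$ vanishes --- to lie strictly outside the polydisk of convergence. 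Since the drift is nonzero the two roots of $K$ are also distinct at $(\bone,t_0)$, so $\zeta$ is analytic there and $(\bone,t_0)$ is a genuinely \emph{smooth} point of the singular variety: the $z_d$-direction contributes an ordinary simple-pole factor rather than the branch-point amplitude that causes the difficulty in the zero-drift case treated later in this paper. I would then confirm, using positivity of the coefficients together with the drift hypothesis, that $(\bone,t_0)$ is the unique contributing critical point and that the minimality and combinatorial-direction hypotheses of smooth ACSV hold, exactly as in the highly symmetric setting.

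Applying smooth ACSV at the smooth minimal point $(\bone,t_0)$ of a $d$-variable diagonal then produces $s_n=S(\bone)^{n}\cdot n^{-(d-1)/2}\cdot\bigl[\,C+O(1/n)\,\bigr]$, where $C$ is computed, in the usual way, as a ratio of the numerator to a logarithmic derivative of the denominator at the critical point, divided by the square root of a Hessian determinant. I expect it to factor as
\[
C=\frac{B(\bone)-A(\bone)}{B(\bone)}\cdot\left(\frac{S(\bone)}{\pi}\right)^{(d-1)/2}\frac{1}{\sqrt{b_{1}\cdots b_{d-1}}},
\]
the first factor being the $z_d$-contribution --- precisely the one-dimensional residue computation of the first paragraph, carried out with $A,B,Q$ replaced by $A(\htbz),B(\htbz),Q(\htbz)$ and then evaluated at $\htbz=\bone$ via $\zeta(\bone,t_0)=A(\bone)/B(\bone)$ --- and $\left(S(\bone)/\pi\right)^{(d-1)/2}/\sqrt{b_{1}\cdots b_{d-1}}$ being exactly the constant of Theorem~\ref{thm:MeMiAsm} in dimension $d-1$, produced by the same Gaussian computation: $t_0=1/S(\bone)$ sets the exponential rate, and the symmetry of $\mS$ in $z_1,\dots,z_{d-1}$ makes the Hessian determinant proportional to $b_1\cdots b_{d-1}$.

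The step I expect to be the main obstacle is this final constant: the small root $\zeta(\htbz,t)$ depends nontrivially on $\htbz$, so its Taylor expansion at $(\bone,t_0)$ feeds into the defining equation of the singular variety, and hence into the Hessian of the height function for the $z_1,\dots,z_{d-1}$ integration. One must check that these cross-terms do not perturb the determinant beyond the expected $b_1\cdots b_{d-1}$ (up to powers of $S(\bone)$) --- equivalently, that the $z_d$-survival factor $\tfrac{B(\bone)-A(\bone)}{B(\bone)}$ and the $(d-1)$-dimensional Gaussian decouple at leading order. Deriving the diagonal identity and verifying the minimality hypotheses should, by contrast, be routine adaptations of the highly symmetric argument.
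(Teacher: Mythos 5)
Your proposal diverges from the paper's route in a way that introduces a genuine gap. The paper (following Melczer--Wilson) works with the $(d+1)$-variate \emph{rational} diagonal representation~\eqref{eq:mostlyDiag}, whose denominator factors as $H_1H_2H_3$ with $H_1=1-z_d$. Keeping $z_d$ as a free variable is precisely what preserves rationality, but it also forces the minimal critical point $\bsigma=(\one,1,1/S(\one))$ in the positive-drift case onto the intersection $\sing_1\cap\sing_2$ --- a \emph{non-smooth} point of $\mV$. The paper is explicit that ``in the positive drift case the desired asymptotic behaviour is characterized by the behaviour of $G/H$ near non-smooth points of $\mV$,'' and Example~\ref{ex:pos} illustrates the resulting multiple-point argument: one localizes near $\bsigma$, takes a \emph{multivariate} residue eliminating both $z_d$ and $t$ (using the positive linear combination $\nabla\phi=c_1\nabla H_1+c_2\nabla H_2$), and then runs a $(d-1)$-dimensional saddle-point analysis. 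The factor $\tfrac{B(\one)-A(\one)}{B(\one)}$ comes out of that multivariate residue, not from a separate one-variable pole computation.

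The concrete gap is your claim that, after substituting the small root $\zeta(\htbz,t)$ of the $z_d$-kernel, $W(t)$ becomes ``the diagonal of a rational function of the $d$ variables $z_1,\dots,z_{d-1},t$ \ldots\ (the degree-two algebraic function $\zeta$ being absorbed into a genuinely rational presentation, as in Melczer--Wilson).'' That misreads the source: Melczer--Wilson's rational presentation \emph{retains} $z_d$ as a $(d+1)$-st variable precisely because eliminating it through $\zeta$ destroys rationality. What you actually obtain after the substitution is a $d$-variate \emph{algebraic} function --- your own $d=1$ sanity check $W(t)=\zeta(t)/\bigl(t(A-B\zeta(t))\bigr)$ is algebraic in $t$, not a rational diagonal --- and the ``smooth critical point'' you then invoke is a smooth point of this algebraic object, not of the rational variety of~\eqref{eq:mostlyDiag}. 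Your branch-point observation is correct and useful (for positive drift the discriminant of $K$ stays away from zero, so $\zeta$ is analytic at $(\one,1/S(\one))$ and the pole of the reduced integrand is simple), and a meromorphic saddle-point analysis of this reduced object is plausible in principle; but you would then have to rebuild the localization, minimality, and contour-deformation steps of ACSV for an algebraic integrand rather than citing the rational-diagonal machinery off the shelf. As written, the ``rational diagonal in $d$ variables'' does not exist, and the smooth ACSV it was supposed to license is exactly the piece of theory that Melczer--Wilson replace by the multiple-point residue argument in order to stay rational.
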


The asymptotic behaviour of a model with a negative drift step set is slightly messier. Let $\rho = \sqrt{A(\bone)/B(\bone)},$ define
\[ B_k(\bzht{k}) = [z_k]S(\bz) = [z_k^{-1}]S(\bz) \]
for each $1 \leq k \leq d-1$, and let
\[ C_{\rho} = \frac{S(\bone,\rho) \, \rho}{2\, \pi^{d/2}\, A(\bone) (1-1/\rho)^2} \cdot \sqrt{\frac{S(\bone,\rho)^d}{\rho \, B_1(\bone,\rho) \cdots B_{d-1}(\bone,\rho) \cdot B(\bone)}}.\]
Furthermore, define the constant $C_{-\rho}$ by replacing $\rho$ with $-\rho$ in $C_{\rho}$ (the term under the square-root will always be real and positive when $C_{-\rho}$ is referenced). 

\begin{theorem}[{Negative Drift Asymptotics~\cite[Theorem 2.3]{MelczerWilson2019}}]
\label{thm:NegAsm}
Let $\mS$ be a negative drift step set that is symmetric over all but the $d$th axis and takes a step forwards and backwards in each coordinate. If $Q(\htbz) \neq 0$ (i.e., if there are steps in $\mS$ having $z_d$ coordinate $0$) then the total weight $s_n$ of walks of length $n$ taking steps in $\mS$, starting at the origin, and never leaving $\N^d$ satisfies
\[ s_n =  S(\bone,\rho)^n \cdot n^{-d/2-1} \cdot C_{\rho} \left(1 + O\left(\frac{1}{n}\right)\right) . \]
If $Q(\htbz) = 0$ then 
\[
s_n = n^{-d/2-1} \cdot {\Big[} S(\bone,\rho)^n \cdot C_{\rho} + S(\bone,-\rho)^n \cdot C_{-\rho} {\Big]}\left(1 + O\left(\frac{1}{n}\right)\right).
\]
\end{theorem}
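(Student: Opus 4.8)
The plan is to follow the ACSV approach used by Melczer--Mishna~\cite{MelczerMishna2016} for highly symmetric models and by Melczer--Wilson~\cite{MelczerWilson2019} here: realize $W(t)=\sum_n s_nt^n$ as the diagonal $\Delta(G/H)$ of an explicit rational function in $d+1$ variables, then read off asymptotics from the geometry of $H$ near its minimal critical points. The representation comes from the kernel method. In the $d-1$ symmetric coordinates $z_1,\dots,z_{d-1}$ one forms the orbit sum over the reflection group $(\Z/2)^{d-1}$, exactly as in the highly symmetric case. In the non-symmetric coordinate, multiplying the kernel $1-tS(\bz)$ through by $z_d$ yields $z_d-t\bigl(A(\htbz)+z_dQ(\htbz)+z_d^2B(\htbz)\bigr)$, a quadratic in $z_d$ whose unique ``small'' root $z_d^{-}(\htbz,t)$ (the one tending to $0$ as $t\to0$) is eliminated against the numerator by the kernel method. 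After clearing denominators one obtains a genuinely rational $F=G/H$; the denominator $H$ cuts out a smooth variety $\mV$ (its $t$-partial does not vanish on $\mV$), so only the smooth case of ACSV is needed, and the numerator $G$ is $(1+z_1)\cdots(1+z_{d-1})$ times a factor inherited from the $z_d$ root-cancellation.

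I would then solve the smooth critical-point equations $z_1\partial_{z_1}H=\cdots=z_d\partial_{z_d}H=t\,\partial_tH$ together with $H=0$. By symmetry of $A,Q,B$, every positive-real solution has $z_1=\cdots=z_{d-1}$, and the equations then force $z_1=\cdots=z_{d-1}=1$ and $z_d^2=A(\one)/B(\one)=\rho^2$; the $t$-coordinate is pinned by $H=0$, and a short computation gives $1/t=S(\one,\pm\rho)$ at the two critical points $\mathbf w_{\pm}=(1,\dots,1,\pm\rho,t_{\pm})$, which accounts for the exponential growth rate. The first real task is to certify \emph{minimality}: since all step weights are positive, $\bigl|A(\htbz)+z_dQ(\htbz)+z_d^2B(\htbz)\bigr|$ on the closed polydisk $\{|z_j|\le1,\ |z_d|\le\rho\}$ is maximized exactly at $(1,\dots,1,\rho)$ when $Q\not\equiv0$, and at both $(1,\dots,1,\pm\rho)$ when $Q\equiv0$ (where $z_d\mapsto-z_d$ is an extra symmetry of the quadratic). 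Hence $\mathbf w_{+}$ is the unique contributing minimal point when $Q(\htbz)\neq0$, whereas both $\mathbf w_{\pm}$ contribute, with $|t_+|=|t_-|$, when $Q(\htbz)=0$ --- this is the source of the one- versus two-term dichotomy. Note that $\rho>1$ in the negative-drift regime, so $\mathbf w_{+}$ genuinely differs from the all-ones critical point of the symmetric case, and $S(\one,\rho)<S(\one)$.

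The crux --- and the main obstacle --- is that the numerator $G$ \emph{vanishes} at $\mathbf w_{+}$ (and at $\mathbf w_{-}$ when $Q\equiv0$): at $z_d=\rho$ the small and large roots of the $z_d$-kernel coalesce, and the corresponding factor of $G$ vanishes there. Concretely, this is why the dominant singularity of $W(t)$ is a square-root branch point rather than a simple pole --- already for $d=1$ one has $W(t)=c_0+c_1\sqrt{1-t/t_c}+O(t_c-t)$ near $t_c=1/S(\one,\rho)$, so that $s_n\sim-\tfrac{c_1}{2\sqrt\pi}\,t_c^{-n}n^{-3/2}$. For general $d$ this means the leading term of the smooth-ACSV expansion, being proportional to $G(\mathbf w_{+})=0$, vanishes, and one must retain the \emph{subleading} term; a numerator vanishing to order exactly one costs one extra factor of $n^{-1}$, producing the stated exponent $n^{-d/2-1}$. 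Extracting $C_\rho$ is then a Gaussian-moment computation involving the first Taylor data of $G$ and the second and third Taylor data of $H$ at $\mathbf w_{+}$: the symmetry block-diagonalizes the relevant Hessian into the $d-1$ directions $z_1,\dots,z_{d-1}$ (each contributing $\sqrt{S(\one,\rho)/B_k(\one,\rho)}$, just as in Theorem~\ref{thm:MeMiAsm}) together with a residual $z_d$-block (supplying the factors built from $\rho$ and $B(\one)$), while the order-one vanishing of $G$ supplies the $(1-1/\rho)^{-2}$, the $A(\one)^{-1}$, and the overall $\tfrac12$. I expect the genuinely delicate points to be (i) certifying the minimality of $\mathbf w_{+}$, and correctly identifying $z_d^{-}$ as the small branch, in this non-highly-symmetric setting, and (ii) running the subleading saddle-point expansion with a vanishing numerator carefully enough that every constant --- in particular the $(1-1/\rho)^2$ and the square-root factor --- comes out exactly as stated; for $Q\equiv0$ one finishes by checking that the two contributions carry the same exponential order and simply add.
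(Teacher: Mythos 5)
Your high-level analytic insight is exactly right: the minimal critical point for a negative-drift model lies on a smooth stratum of the singular variety, the numerator of the rational diagonal vanishes there, forcing you into the subdominant term of the smooth-ACSV expansion and costing one extra factor of $n^{-1}$ (hence $n^{-d/2-1}$), and the $Q\equiv 0$ case produces a second contributing point with the same exponential rate, giving the two-term formula. This matches what the paper and~\cite{MelczerWilson2019} do, as illustrated in the paper's negative-drift example.

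However, your description of the diagonal representation itself is wrong in a way that would undermine the argument if you tried to run it literally. You assert that after multiplying the kernel by $z_d$ and ``eliminating the small root against the numerator'' one can ``clear denominators'' to obtain a rational $F=G/H$ whose denominator cuts out a \emph{smooth} variety ($H_t\neq 0$ on $\mV$). Neither claim holds. The small root $z_d^-(\widehat{\bz},t)$ is algebraic, and simply substituting it does not yield a rational function; the procedure Melczer and Wilson actually use produces the rational diagonal recorded as equation~\eqref{eq:mostlyDiag}, whose denominator factors as $H=H_1H_2H_3$ with $H_1=1-z_d$, $H_2 = 1-tz_1\cdots z_d\,\overline{S}(\bz)$, and $H_3 = 1-tz_1\cdots z_d(Q+z_dA)$. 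This $\mV$ is \emph{not} smooth --- it is a union of three hypersurfaces, and $\partial_t H_1=0$ everywhere, so your ``$t$-partial does not vanish'' justification fails. What saves the negative-drift case is that the relevant critical point $\brho$ (which in the paper's parameterization has $z_d=1/\rho<1$, because the kernel is built from $\overline{S}$ rather than $S$) lies only on the $H_2=0$ sheet, so it is a smooth point of $\mV$ and smooth ACSV applies \emph{locally}; but this must be checked against $H_1$ and $H_3$, not assumed by fiat. Your parameterization with $z_d=\rho>1$ is internally consistent but then your proposed minimality region $\{|z_j|\le 1,\ |z_d|\le\rho\}$ is asserted rather than derived from a concrete diagonal, and without the explicit $G/H$ you cannot actually verify either minimality or the order-one vanishing of $G$ at the critical point. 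The fix is to take the three-factor representation as given, verify that $\brho$ lies only on the $H_2$ sheet, confirm $G(\brho)=0$ with nonvanishing gradient, and then run the smooth subleading expansion --- the rest of your outline (Gaussian moment computation, block-diagonal Hessian, sources of the $(1-1/\rho)^{-2}$ and $A(\one)^{-1}$ factors, and the two-term sum when $Q\equiv 0$) is the right shape.
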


Theorems~\ref{thm:PosAsm} and~\ref{thm:NegAsm} are derived by applying the techniques of ACSV to the diagonal expression
\begin{equation}
W(t) = \Delta\left( \frac{ G(\bz,t) }{H(\bz,t) } \right),
\label{eq:mostlyDiag}
\end{equation}
where
\begin{align*}
G(\bz,t) &= (1+z_1) \cdots (1+z_{d-1}) \left( 1- tz_1 \cdots z_d \left(Q(\htbz) + 2 z_d A(\htbz) \right) \right)\\
H(\bz,t) &= (1-z_d) \left( 1- tz_1 \cdots z_d \oS(\zz)\right) \left( 1- tz_1 \cdots z_d \left(Q(\htbz) +  z_d A(\htbz) \right) \right)
\end{align*}
for $\oS(\bz) = S(z_1,\dots,z_{d-1},\oz_d)$. In comparison to the diagonal expression~\eqref{eq:highlyDiag} for highly symmetric models, the denominator in~\eqref{eq:mostlyDiag} has multiple irreducible factors, and the singular set $\mV$ of $G/H$ is the union of three smooth surfaces. In the negative drift case the asymptotic behaviour of the diagonal sequence is still determined by the behaviour of $G/H$ near smooth points of $\mV$, however in the positive drift case the desired asymptotic behaviour is characterized by the behaviour of $G/H$ near non-smooth points of $\mV$. This geometric difference explains why positive and negative drift models have quantitatively different asymptotic behaviour (combinatorially, the difference is explained by the fact that positive drift models don't naturally wander into the restricting boundaries as frequently).

\begin{rem}
The techniques of ACSV allow the asymptotic expansions in Theorems~\ref{thm:MeMiAsm} to~\ref{thm:NegAsm} to be computed to any desired order. The expressions for higher-order constants are very unwieldy for general models, but can be computed automatically using a computer algebraic system for any explicit model.
\end{rem}

\subsection{Our Results}

Theorems~\ref{thm:MeMiAsm} to~\ref{thm:NegAsm} leave a gap, as they do not cover zero drift walks that are mostly symmetric but not highly symmetric. Our main result is to fill this gap.

\begin{theorem}
\label{thm:main}
Let $\mS$ be a step set that is symmetric over all but the $d$th axis and takes a step forwards and backwards in each coordinate.  If $\mS$ has zero drift then the total weight $s_n$ of walks of length $n$ taking steps in $\mS$, starting at the origin, and never leaving $\N^d$ satisfies
\[
s_n \sim S(\bone)^n \cdot n^{-d/2} \cdot \frac{ S(\one)^{d/2}}{\pi^{d/2} (b_1 \cdots b_d)^{1/2}}. 
\]
\end{theorem}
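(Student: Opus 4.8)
The plan is to push the analytic combinatorics of Melczer and Wilson~\cite{MelczerWilson2019} one step further, into the regime where the critical point $\rho=\sqrt{A(\bone)/B(\bone)}=1$ makes their amplitude singular at its saddle. Starting from the diagonal representation \eqref{eq:mostlyDiag}, Cauchy's formula gives $s_n=\frac{1}{(2\pi i)^{d+1}}\oint \frac{G(\bz,t)}{H(\bz,t)}\,\frac{d\bz\,dt}{(z_1\cdots z_d\,t)^{\,n+1}}$ over a small torus. Because $H$ is the product of $(1-z_d)$ and two factors affine in $t$, I would first carry out the $t$-integration and a partial-fraction split in $t$, obtaining for a torus $\mathbf T=\{|z_j|=\varepsilon\}$
\[
s_n=\frac{1}{(2\pi i)^{d}}\oint_{\mathbf T}\frac{R_1(\bz)\,\oS(\bz)^n+R_2(\bz)\big(Q(\htbz)+z_dA(\htbz)\big)^n}{(1-z_d)\,z_1\cdots z_d}\,d\bz ,
\]
where, after cancellation, $R_1(\bz)=(1+z_1)\cdots(1+z_{d-1})\big(B(\htbz)-z_d^2A(\htbz)\big)/B(\htbz)$ and $R_2(\bz)=(1+z_1)\cdots(1+z_{d-1})\,z_d^2A(\htbz)/B(\htbz)$. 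One checks that the combined numerator is a Laurent polynomial in $\htbz$ — the apparent poles along $\{B(\htbz)=0\}$ disappear since $\oS(\bz)=Q(\htbz)+z_dA(\htbz)$ there — so the only singularity of the integrand inside the closed unit polydisc is the pole along $\{z_d=1\}$. We may therefore inflate all radii to $1$, indenting only the $z_d$-contour by a small arc around $z_d=1$.

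Next I would isolate the dominant term. Since $|Q(\bone)+A(\bone)|=S(\bone)-B(\bone)<S(\bone)$, the $R_2$-term is $O\big((S(\bone)-\eta)^n\big)$ for some $\eta>0$, hence negligible. The $R_1$-term is a Laplace integral with phase $\log\oS(\bz)-\sum_j\log z_j$: using the reflection symmetry of $\mS$ in the first $d-1$ coordinates (which yields $\partial_{z_j}A(\bone)=\partial_{z_j}B(\bone)=\partial_{z_j}Q(\bone)=0$ for $j<d$) together with the zero-drift identity $A(\bone)=B(\bone)$, one verifies that $\bz=\bone$ is a critical point and $\bt\mapsto|\oS(e^{i\bt})|$ attains a strict, nondegenerate maximum there; in fact the Hessian of $-\log\oS(e^{i\bt})$ at $\bzer$ equals $\mathrm{diag}\big(2b_1/S(\bone),\dots,2b_d/S(\bone)\big)$, its off-diagonal entries vanishing precisely by the reflection symmetries together with zero drift. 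The obstruction — absent from the highly symmetric and the $\pm$-drift cases — is that the amplitude $R_1(\bz)/\big((1-z_d)z_1\cdots z_d\big)$ has a pole along $\{z_d=1\}$ running straight through the saddle point $\bz=\bone$.

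The heart of the argument will be to show that this singular amplitude behaves, to leading order, like its regular value at the saddle. Zero drift forces $R_1(\bone)=0$; a short expansion gives $R_1(\bz)=2^{d-1}(1+z_d)(1-z_d)+O\big(\|\bz-\bone\|\,|1-z_d|+\|\htbz-\hat{\mathbf 1}\|^2\big)$ near $\bone$ (the $\htbz$-linear part of the ``residue'' $R_1(\htbz,1)$ also vanishes by symmetry), so $R_1(\bz)/(1-z_d)\to 2^d$ and the polar part of the amplitude has residue vanishing to second order at the saddle. Choosing the indentation radius $\varepsilon_n\to 0$ with $n\varepsilon_n^2\to 0$ and $n\varepsilon_n\to\infty$ (e.g.\ $\varepsilon_n=n^{-3/4}$): the indentation arc contributes $O\big(\varepsilon_n S(\bone)^n\big)$ times the $(d-1)$-dimensional Gaussian factor $n^{-(d-1)/2}$ from the $\htbz$-variables, hence $o\big(S(\bone)^n n^{-d/2}\big)$, while off the arc — on the shrinking region $\|\htbz-\hat{\mathbf 1}\|=O(\sqrt{(\log n)/n})$ that carries the asymptotics — $|1-z_d|\ge\varepsilon_n\gg\|\htbz-\hat{\mathbf 1}\|^2$ forces $R_1(\bz)/(1-z_d)\to 2^d$ uniformly. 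Thus $s_n\sim 2^d\cdot\frac{1}{(2\pi i)^{d}}\oint_{|z_j|=1}\oS(\bz)^n\,\frac{d\bz}{z_1\cdots z_d}=2^d\,[\bz^{\bzer}]\,\oS(\bz)^n$, and a local central limit theorem (equivalently, multivariate Laplace with the Hessian above) gives $[\bz^{\bzer}]\oS(\bz)^n\sim S(\bone)^n(2\pi n)^{-d/2}\big(S(\bone)^d/(2^d b_1\cdots b_d)\big)^{1/2}$; multiplying out reproduces $s_n\sim S(\bone)^n\,n^{-d/2}\,S(\bone)^{d/2}/\big(\pi^{d/2}(b_1\cdots b_d)^{1/2}\big)$. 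It is natural to package the local analysis as a standalone lemma on saddle-point integrals whose amplitude has a simple pole through the saddle with a second-order-vanishing residue, and then apply it.

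The step I expect to be the main obstacle is exactly this local analysis: taming the contour indentation around the pole $z_d=1$, which sits at the saddle and near which $|\oS|$ slightly exceeds $S(\bone)$, and showing rigorously that the singular part of the amplitude contributes below the leading order. Both hypotheses enter here essentially — zero drift ($A(\bone)=B(\bone)$) makes $R_1(\bone)=0$, without which the integral diverges, and the partial symmetry of $\mS$ both forces this vanishing to be of high enough order and makes the Hessian block-diagonal in the $z_d$ versus $\htbz$ directions, which is what permits the singular contribution to cancel. Establishing minimality of the relevant boundary point and the precise error terms in the Laplace estimate are the remaining, more routine, ingredients.
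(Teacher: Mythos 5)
Your proposal takes essentially the same route as the paper: localize near the critical point $\one$ (and its companions on the same torus), compute a $t$-residue on the factor $H_2$, and approximate the resulting $d$-fold saddle-point integral — whose amplitude carries a simple pole through the saddle, compensated by a second-order-vanishing numerator — by calibrating a small radius $\varepsilon_n$ strictly between $n^{-1}$ and $n^{-1/2}$. The paper realizes the same calibration slightly differently: it never brings the $z_d$-contour to the unit circle, instead holding $|z_d|=1-\epsilon$ with $\epsilon=n^{-\alpha}$, $1/2<\alpha<1$ (so no indentation is needed), and it extracts the residue by comparing $C^{\mathrm{in}}$ and $C^{\mathrm{out}}$ rather than via partial fractions in $t$. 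These are interchangeable manipulations and your error budget ($n\varepsilon_n^2\to0$, $n\varepsilon_n\to\infty$, $\|\htbz-\hat\one\|^2/\varepsilon_n\to0$) matches the paper's running assumptions term for term.

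Two points in your sketch need tightening before the argument closes. First, the passage $s_n\sim 2^d\,[\bz^{\bzer}]\oS(\bz)^n$ together with the quoted LCLT asymptotics is correct only when the reflected step set is aperiodic. In a periodic model — e.g.\ the paper's Example~\ref{ex:zerodrift} with steps $\{(0,1),(0,1),(-1,-1),(1,-1)\}$, where the sublattice has index $4$ — the constant coefficient $[\bz^{\bzer}]\oS^n$ vanishes off the supporting residue class and is correspondingly larger on it, so it does not track $s_n$; what is correct, and what your computation actually produces, is the localized Laplace integral near $\one$. Second, you never address the other torus points $\ww$ with $|\oS(\ww)|=S(\one)$ (the set $\Gamma$ of the paper). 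The factors $(1+z_j)$ in your $R_1$ kill those with some $w_j=-1$, $j<d$; points with $w_d=-1$ contribute a smooth saddle-point integral of order $O\big(S(\one)^n n^{-(d+1)/2}\big)$ because the residue numerator $B-w_d^2 A$ still vanishes at the centre; and points with $w_d=\pm i$ require the additional observation (the paper's final proposition) that the definition of $\Gamma$ forces $A(\ww)=-B(\ww)$ and hence some $w_j=-1$. None of this changes the leading constant, but it must be argued rather than absorbed into "routine" estimates for the proof to be complete.
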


The dominant asymptotic behaviour of a zero drift mostly symmetric model matches the dominant term in the asymptotics for a highly symmetric model, however such mostly symmetric models only approximate highly symmetric models `up to the dominant term' and the subdominant terms do not match. In fact, mostly symmetric models have asymptotic expansions in powers of $n^{-1/2}$ compared to the highly symmetric models whose expansions are series in powers of $n^{-1}$ (see, for instance, Example~\ref{ex:zerodrift2} below). To the best of our knowledge, this is the first family of naturally occurring combinatorial classes analyzed using the methods of ACSV to not have a series in powers of~$n^{-1}$.

\begin{rem} 
In any dimension $d \geq 2$ there exist walk models with short step sets symmetric over all but two axes whose generating functions are non-D-finite, and therefore cannot be written as a rational diagonal (see~\cite[Theorem 3.8]{MelczerWilson2019}). Thus, mostly symmetric models are the largest class of short step models defined by axial symmetries that can be handled directly by applying the techniques of ACSV to rational diagonal expressions derived from the kernel method; see Table~\ref{tab:summary} for a (now complete) summary.
\end{rem}

\begin{rem}
Melczer and Wilson~\cite{MelczerWilson2019} incorrectly conjectured that the zero drift mostly symmetric models have asymptotic growth of the form $O(S(\one)^nn^{-(d+1)/2})$.
\end{rem}

\begin{table}
\centering
\begin{tabular}{|llllll|}
\hline
Drift & Exponential Rate & Order & Geometry & Series in & Covered By\\ \hline &&&&&\\[-3mm]
zero (highly sym.) &$S(\one)$ &$n^{-d/2}$ & smooth & $n^{-1}$ & Theorem~\ref{thm:MeMiAsm}\\
zero (mostly sym.) & $S(\one)$ & $n^{-d/2}$ & nonsmooth & $n^{-1/2}$ & Theorem~\ref{thm:main} \\
positive & $S(\one)$ & $n^{-(d-1)/2}$ & nonsmooth & $n^{-1}$ & Theorem~\ref{thm:PosAsm} \\
negative &$< S(\one)$  & $n^{-1-d/2}$ & smooth & $n^{-1}$ & Theorem~\ref{thm:NegAsm} \\
\hline
\end{tabular}
\caption{Summary of results for short step models with highly or mostly symmetric step sets.} 
\label{tab:summary}
\end{table}

Theorem~\ref{thm:main} is proven in Section~\ref{sec:mainProof}. First, in Section~\ref{sec:ACSV}, we give an overview of the methods of ACSV and describe why computing the behaviour of zero drift mostly symmetric models is harder than the other highly and mostly symmetric cases.

\section{ACSV and Lattice Path Enumeration}
\label{sec:ACSV}

The field of analytic combinatorics in several variables derives the asymptotic behaviour of the diagonal coefficients $(f_{n,\dots,n})$ of a power series expansion
\begin{equation} F(\zz,t) = \sum_{(\bi,n) \in \N^{d+1}} f_{\bi,n}\bz^\bi t^n \label{eq:ps} \end{equation}
by analyzing the singularities of the function $F$. Here we restrict ourselves to the case when $F(\zz,t)=G(\zz,t)/H(\zz,t)$ is a rational function defined by the coprime polynomials $G$ and $H$, so that the singular set $\sing$ of $F$ is defined by the vanishing of the denominator $H$.

Generalizing the more commonly known univariate setting, the starting point for ACSV is a Cauchy integral representation
\begin{equation} f_{n,\dots,n} = \frac{1}{(2\pi i)^d} \int_T F(\zz,t) \frac{d\zz}{(z_1\cdots z_d t)^{n+1}}, \label{eq:CIF} \end{equation}
where $T$ is a product of circles $|z_j|=\epsilon_j$ and $|t|=\epsilon$ lying in the interior of the domain of convergence $\mD$ of the power series~\eqref{eq:ps}. Many common univariate generating functions (for instance, all rational, algebraic, and D-finite functions) admit only a finite number of singularities, and classical analytic combinatorics shows how to manipulate Cauchy integral representations to determine the `contribution' of each singularity to asymptotic behaviour. In contrast, in the multivariate case the set $\sing$ is infinite and a more intricate analysis is required. Two particular types of singularities play a large role in the analysis. 

\begin{itemize}
	\item A \emph{minimal point} of $\sing$ is an element of $\sing$ on the boundary of $\mD$. Equivalently, $(\ww,t) \in \sing$ is a minimal point if and only if there does not exist $(\xx,s) \in \sing$ such that $|x_j|<|w_j|$ for all $1 \leq j \leq d$ and $|s|<|t|$. Minimal points are the singularities of $F$ that the domain of integration $T$ in~\eqref{eq:CIF} can be deformed arbitrarily close to by expanding each $|z_j|$ and $|t|$ independently. 
	\item A \emph{critical point} of $\sing$ is, informally, an element of $\sing$ around which $\sing$ has a saddle-point. The formal definition of a critical point depends on the geometry of $\sing$; when $H$ and all of its partial derivatives never simultaneously vanish (which happens generically) then the implicit function theorem implies that $\sing$ is a manifold and the critical points are defined by the polynomial system
	\[ H(\zz,t) = 0 \text{ and } z_1H_{z_1}(\zz,t) = \cdots = z_dH_{z_d}(\zz,t) = tH_{t}(\zz,t), \]
	where $H_v$ denotes the partial derivative of $H$ with respect to the variable $v$. In general, one partitions $\sing$ into a collection of smooth manifolds (a so-called \emph{Whitney stratification} of $\sing$) and computes the critical points on each manifold by solving a system of polynomial equalities and inequalities.
\end{itemize}

When there are minimal critical points, and other natural conditions hold, then one can deform the Cauchy domain of integration in~\eqref{eq:CIF} to lie close to these points (due to minimality), compute residues to reduce to lower-dimensional integrals lying `on' $\sing$, and then approximate these integrals using the saddle-point method (due to criticality). Generically there are a finite set of critical points, and the asymptotic contributions of each can be summed to give asymptotics of the diagonal sequence. For lattice path models, the analytic behaviour of $F$ near the points determining asymptotics is linked to symmetry properties of the corresponding step sets.

\subsection{Highly Symmetric Models}
The simplest case geometrically occurs for highly symmetric models. As mentioned above, there is a diagonal expression
\[ W(t) = \Delta \left(\frac{G(\bz,t)}{H(\bz,t)}\right) = \Delta\left(\frac{(1+z_1)\cdots(1+z_d)}{1 - t(z_1\cdots z_d)S(\bz)}\right),\]
for the generating function $W(t)$ enumerating walks in the model defined by $\mS$, where $S(\zz) = \sum_{\bi \in \mS}w_{\bi}\zz^{\bi}$. Because $H$ and its partial derivative with respect to $t$ never simultaneously vanish, the singular set $\sing$ is always a smooth manifold.

\begin{example}
\label{ex:smooth}
Consider the (unweighted) highly symmetric model with step set $\mS = \{(0,\pm1),(\pm1,0)$ defined by the cardinal directions
\[ \diagr{N,S,E,W} \]
whose generating function is the diagonal of 
\[ F(x,y,t) = \frac{G(x,y,t)}{H(x,y,t)} = \frac{(1+x)(1+y)}{1 - txyS(x,y)} \]
where $S(x,y)=x+1/x+y+1/y$. The system of polynomial equations
\[ H(x,y,t) = 0 \text{ and } xH_x(x,y,t) = yH_y(x,y,t) = tH_t(x,y,t) \]
defining the critical points has the two solutions $(1,1,1/4)$ and $(-1,-1,-1/4)$, both of which are minimal as if $|x|,|y|<1$ and $H(x,y,t)=0$ then $|t| = 1/|xyS(x,y)| > 1/4$.

The Cauchy integral formula implies that the number $s_n$ of walks on $\mS$ that start at the origin and stay in $\N^2$ satisfies
\[ s_n = \frac{1}{(2\pi i)^2} \int_{\substack{|x|=1 \\ |y|=1}} \left(\int_{|t| = 1/4-\epsilon} \frac{(1+x)(1+y)}{1 - txyS(x,y)} \frac{dt}{t^{n+1}} \right)  \, \frac{dxdy}{(xy)^{n+1}} \]
for any $\epsilon>0$ sufficiently small. Analytic arguments show that we can restrict the circles $|x|=|y|=1$ to sufficiently small neighbourhoods $\mN_{\pm}$ of $(1,1)$ and $(-1,-1)$, and expand the circle $|t|=1/4-\epsilon$ to a circle $|t|=1/4+\epsilon$, while introducing exponentially small errors. Because the numerator of $F$ vanishes at $(-1,-1)$ it turns out that the only point determining dominant asymptotic behaviour of $s_n$ is $(1,1)$, and we can write
\begin{align*} 
s_n &\sim \frac{-1}{(2\pi i)^2} \int_{\mN_+} \left(\int_{|t| = 1/4+\epsilon} \frac{(1+x)(1+y)}{1 - txyS(x,y)} \frac{dt}{t^{n+1}} - \int_{|t| = 1/4-\epsilon} \frac{(1+x)(1+y)}{1 - txyS(x,y)} \frac{dt}{t^{n+1}} \right)  \, \frac{dxdy}{(xy)^{n+1}} \\[+2mm]
&= \frac{1}{(2\pi i)^2} \int_{\mN_+} \frac{(1+x)(1+y)}{xy} S(x,y)^{n} \, dxdy,
\end{align*}
where the difference of integrals in the first line is computed by taking a residue at the pole $t=\frac{1}{xyS(x,y)}$. 

Parametrizing the neighbourhood $\mN_+$ of $(1,1)$ by $x=e^{i\theta_1}$ and $y = e^{i\theta_2}$ for $\theta_1,\theta_2$ in a neighbourhood $\mM$ of the origin, the argument concludes by applying the saddle-point method to approximate
\begin{align*}
\frac{1}{(2\pi i)^2} \int_{\mN_+} \frac{(1+x)(1+y)}{xy} S(x,y)^{n} \, dxdy
&= \frac{4^n}{(2\pi)^2} \int_{\mM} (1+e^{i\theta_1})(1+e^{i\theta_2}) e^{n \log \frac{S(e^{i\theta_1},e^{i\theta_2})}{S(1,1)}} d\theta_1d\theta_2 \\[+2mm]
&= \frac{4^n}{(2\pi)^2} \int_{\mM} (4 + O(\theta_1+\theta_2)) e^{-n(\theta_1^2/4 + \theta_2^2/4) + O(n(\theta_1 + \theta_2)^3)} d\theta_1d\theta_2 \\[+2mm]
&\sim \frac{4^n}{(2\pi)^2} \int_{\R^2} 4 e^{-n(\theta_1^2/4 + \theta_2^2/4)} d\theta_1d\theta_2 \\[+2mm]
&= \frac{4}{\pi} \cdot \frac{4^n}{n}.
\end{align*}
\end{example}

Melczer and Mishna~\cite{MelczerMishna2016} combined the techniques of smooth ACSV, which show how to make all steps in the above argument rigorous, with the uniform diagonal expression for highly symmetric models to prove Theorem~\ref{thm:MeMiAsm}.

\subsection{Mostly Symmetric Models}
The generating function $W(t)$ for walks in $\N^d$ on a set of mostly symmetric short steps $\mS$ has the diagonal representation
\[ W(t) = \Delta F(\zz,t) = \Delta\left(\frac{(1+z_1) \cdots (1+z_{d-1}) \left( 1- tz_1 \cdots z_d \left(Q(\htbz) + 2 z_d A(\htbz) \right)\right)}{(1-z_d) \left( 1- tz_1 \cdots z_d \oS(\zz)\right) \left( 1- tz_1 \cdots z_d \left(Q(\htbz) +  z_d A(\htbz) \right) \right)}\right), \]
and the three factors in the denominator give the singular set $\mV$ non-smooth points. The final denominator factor turns out to be irrelevant to the asymptotic analysis, so the points of interest lie on the zero sets $\sing_1$ and $\sing_2$ of $H_1(\zz,t) = 1-z_d$ and $H_2(\zz,t) = 1-tz_1\cdots z_d \oS(\zz)$. The sets $\sing_1$ and $\sing_2$ are each manifolds: $\sing_1$ does not contain any critical points, but $\sing_2$ contains the critical point 
\[ \brho = \left(\one,\sqrt{\frac{B(\one)}{A(\one)}},\frac{\sqrt{A(\one)/B(\one)}}{2\sqrt{A(\one)B(\one)} + Q(\one)}\right)\] 
along with (potentially) other critical points with the same coordinate-wise modulus and non-minimal critical points that do not affect dominant asymptotics. In addition, the intersection $\sing_1 \cap \sing_2$ is a manifold containing the critical point
\[ \bsigma = \left(\one,1,S(\one)\right)\] 
and (once again) potentially other critical points with the same coordinate-wise modulus. The points near which analytic behaviour of $F$ determines dominant asymptotics of $s_n$ depends on the drift of the model.

\subsubsection{Negative Drift Models}

When $\mS$ has negative drift then $A(\one)>B(\one)$ and the point $\brho$ is a smooth minimal critical point of $\sing$. 

\begin{example}
Consider the (unweighted) mostly symmetric model with step set $\mS = \{(-1,-1),(1,-1),(0,1)\}$
\[ \diagr{N,SE,SW} \]
whose generating function is the diagonal of
\[ F(x,y,t) = \frac{(1 + x)(1-2t(x^2y^2+1))}{(1 - y)(1- t(x^2y^2 + y^2 +x))(1-t(x^2y^2+1))}. \]
The critical point $\brho = (1,1/\sqrt{2},1/2)$ is minimal, as are the minimal critical points $(1,-1/\sqrt{2},1/2)$ and $(-1,\pm i/\sqrt{2},-1/2)$, and this time both $\brho$ and $(1,-1/\sqrt{2},1/2)$ contribute to dominant asymptotics of $s_n$. Because $H_2$ is the only denominator factor vanishing at these points, the smooth analysis used in Example~\ref{ex:smooth} above still applies. Computing a residue and applying the saddle-point method to determine the asymptotic contribution of each of these points ultimately yields
\[ s_n \sim \frac{16+12\sqrt{2}}{\pi} \cdot \frac{(2\sqrt{2})^n}{n^2} + \frac{-16+12\sqrt{2}}{\pi} \cdot \frac{(-2\sqrt{2})^n}{n^2} 
= \begin{cases} \frac{24\sqrt{2}}{\pi} \cdot \frac{(2\sqrt{2})^n}{n^2} &: n \text{ even} \\[+2mm]
\frac{32}{\pi} \cdot \frac{(2\sqrt{2})^n}{n^2} &: n \text{ odd} \end{cases}. \]
\end{example}

\subsubsection{Positive Drift Models}

When $\mS$ has positive drift then $A(\one)<B(\one)$ and the point $\btau$ is no longer minimal, as its $d$th coordinate is larger than 1. In fact, the only minimal critical point is the non-smooth point $\bsigma$ (and potentially other points with the same coordinate-wise modulus that do not affect dominant asymptotics) and a more general argument must be applied.

\begin{example}
\label{ex:pos}
Consider the (unweighted) mostly symmetric model with step set $\mS = \{(-1,1),(1,1),(0,-1)\}$
\[ \diagr{S,NE,NW} \]
whose generating function is the diagonal of
\[ F(x,y,t) = \frac{(1 + x)(1-2txy^2)}{(1 - y)(1- t(xy^2 + x^2 +1))(1-txy^2)}. \]
The critical points defined by the vanishing of $H_2(x,y,t) = 1- t(xy^2 + x^2 +1)$ have coordinate-wise modulus $(1,\sqrt{2},1/4)$ and are not minimal as expected. Because $\bsigma=(1,1,1/3)$ is the only minimal critical point, we start by writing
\begin{equation} 
s_n = \frac{1}{2\pi i} \int_{|x|=1} \left(\frac{1}{(2\pi i)^2}\int_{T(1-\epsilon_1,1/3-\epsilon_2)} \frac{P(x,y)}{(1 - y)(1- t(xy^2 + x^2 +1))} \frac{dydt}{(xyt)^{n+1}} \right)  \, dx
\label{eq:pos}
\end{equation}
where we use the notation $T(\ww) = \{\zz :|z_j|=|w_j| \text{ for all } j\}$, the constants $\epsilon_1$ and $\epsilon_2$ are sufficiently small, and 
\[ P(x,y,t) = \frac{(1 + x)(1-2txy^2)}{(1-txy^2)}\] 
captures the numerator and the denominator factor that does not vanish near $\bsigma$. If $\mN(\ww)$ denotes a subset of $T(\ww)$ where each coordinate is sufficiently close to the positive real axis then an analytic argument shows that we can restrict $x$ to $\mN(1)$ and $(y,t)$ to $\mN(1-\epsilon_1,1/3-\epsilon_2)$.

Our next goal is to compute residues to remove two variables, one for each vanishing denominator factor. To that end, we note that the part of the integrand in~\eqref{eq:pos} that depends on $n$ is captured by the function $\phi(x,y,t) = (xyt)^{-1}$. Since
\[ \underbrace{(\grad \phi)(1,1,1/3)}_{(-3,-3,-9)} = \underbrace{(\grad H_1)(1,1,1/3)}_{(0,-1,0)} \; + \; 3 \; \underbrace{(\grad H_2)(1,1,1/3)}_{(-1,-2/3,-3)} \]
is a linear combination with positive coefficients, replacing the domain $\mN(1-\epsilon_1,1/3-\epsilon_2)$ in~\eqref{eq:pos} with the domains $\mN(1+\epsilon_1,1/3-\epsilon_2), \mN(1-\epsilon_1,1/3+\epsilon_2),$ and $\mN(1+\epsilon_1,1/3+\epsilon_2)$ that `cross' the singular sets $\mV_1$ and $\mV_2$ results in integrals that are exponentially smaller than the one under consideration. A signed sum of these integrals is computed by taking a `multivariate residue' at the points where $y=1$ and $t=1/(xy^2 + x^2 +1)=1/(x^2+x+1)$, ultimately giving that
\begin{align*} 
s_n &\sim \frac{1}{2\pi i} \int_{\mN(1)} \frac{P(x,1)}{x} (x + 1 + 1/x)^n \, dx \\[+2mm]
&= \frac{1}{2\pi i} \int_{\mN(1)} \frac{(x^2 - x + 1)(1 + x)}{x(x^2 + 1)} (x + 1 + 1/x)^n \, dx.
\end{align*}

Making the change of variables $x=e^{i\theta}$ for $\theta$ in a neighbourhood $\mM$ of the origin, then applying the saddle-point method, gives 
\[ s_n \sim \frac{1}{2\pi} \int_{\mM} \frac{(e^{2i\theta} - e^{i\theta} + 1)(1 + e^{i\theta})}{(e^{2i\theta} + 1)} e^{n\log(e^{i\theta} + 1 + e^{-i\theta})} \, d\theta \sim \frac{1}{2\pi} \int_{\R} e^{n\log(3)-n\theta^2/3} d\theta = \frac{\sqrt{3}}{2\sqrt{\pi}} \cdot \frac{3^n}{\sqrt{n}}. \]
\end{example}

In the mostly symmetric negative drift case the gradient of $\phi$ at $\bsigma$ cannot be written as a positive linear combination of the gradients of $H_1$ and $H_2$ at $\bsigma$, so the non-smooth minimal critical point $\bsigma$ cannot be used to characterize asymptotics. Of course, for negative drift models the smooth critical point $\brho$ is minimal and can be used instead. The fact that smooth minimal critical points arise precisely when non-smooth critical points cannot be used to compute asymptotics holds more generally, and is explained by the theory of ACSV for \emph{multiple points} (see~\cite[Chapter 9]{Melczer2021} or~\cite[Chapter 10]{PemantleWilsonMelczer2024}). 

\subsubsection{Zero Drift Models}

In the zero drift case $A(\one)=B(\one)$, so the critical points $\brho$ and $\bsigma$ collide. This makes two denominator factors vanish at the minimal critical point determining asymptotics, but unlike the positive drift case we can only take one residue. Computing asymptotics thus requires a more careful analysis, conducted in Section~\ref{sec:mainProof} to prove Theorem~\ref{thm:main}.

\begin{example}
\label{ex:zerodrift}
Consider the mostly symmetric zero drift model with step set $\mS = \{(0,1),(0,1),(-1,-1),(1,1)\}$ whose North step appears twice (i.e., has a weight of two).
\[ \diagr{N,N2,SE,SW} \]
The generating function for this walk model can be written as the diagonal of 
\[ F(x,y,t) = \frac{(1 + x)(1 - 2ty^2(x^2 + 1))}{(1 - y)(1 - txy(2/y + xy + y/x))(1 - ty^2(x^2 + 1))}, \]
with minimal critical points $\brho=\bsigma=(1,1,1/4)$ with $(1,-1,-1/4)$ and $(-1,\pm i,\mp i)$. Because $H_2$ is the only denominator factor of $H$ that vanishes at the final three points, the contributions of these points to the asymptotic behaviour of $s_n$ are covered by smooth ACSV: the fact that the numerator vanishes at each of these points further implies that these contributions are bounded in $O(4^n n^{-2})$, which will turn out not to affect dominant asymptotics of $s_n$.

Analogously to the positive drift case, we begin by writing
\begin{equation} 
s_n = \frac{1}{2\pi i} \int_{T(1)} \left(\frac{1}{(2\pi i)^2}\int_{T(1-\epsilon_1,1/4-\epsilon_2)} \frac{P(x,y)}{(1 - y)(1 - txy(2/y + xy + y/x))} \frac{dydt}{(xyt)^{n+1}} \right)  \, dx
\label{eq:zer}
\end{equation}
where the constants $\epsilon_1$ and $\epsilon_2$ are sufficiently small, and 
\[ P(x,y,t) = \frac{(1 + x)(1 - 2ty^2(x^2 + 1))}{1 - ty^2(x^2 + 1)}\] 
captures the numerator and the denominator factor that does not vanish near $\bsigma$. Again we can replace the domains of integration $T(1)$ and $T(1-\epsilon_1,1/4-\epsilon_2)$ by subsets $\mN(1)$ and $\mN(1-\epsilon_1,1/4-\epsilon_2)$ near the positive real axes while introducing a negligible error. The part of the integrand in~\eqref{eq:zer} that depends on $n$ is still captured by the function $\phi(x,y,t) = (xyt)^{-1}$, but now
\[ \underbrace{(\grad \phi)(1,1,1/4)}_{(-4,-4,-16)} = 0 \; \underbrace{(\grad H_1)(1,1,1/4)}_{(0,-1,0)} \; + \; 4 \; \underbrace{(\grad H_2)(1,1,1/4)}_{(-1,-1,-4)} \]
is a linear combination with one coefficient equal to zero. Because of this, we can replace $\mN(1-\epsilon_1,1/4-\epsilon_2)$ by a domain $\mN(1-\epsilon_1,1/4+\epsilon_2)$ `crossing' the factor $H_2$ and get something growing exponentially smaller than~$s_n$, but (unlike the positive drift case) we can no longer introduce the integrals whose domains of integration `cross' $H_1$. Performing the residue computation for the pole $t=1/xy\oS(x,y)$ gives
\[ s_n \sim \frac{1}{(2 \pi i)^2} \int_{\mN(1)} \int_{\mN(1-\epsilon_1)} \frac{1+x}{2x^2y} \cdot \frac{2x - y^2(1+x^2)}{1-y} \oS(x,y)^n dx dy , \]
and the $1-y$ factor in the denominator complicates the analysis. We note, however, that the numerator factor $2x - y^2(1+x^2)$ also vanishes at $x=y=1$ and 
\[ \frac{2x - y^2(1+x^2)}{1-y} = 4 + O(1-x) + O(1-y) + O\left(\frac{(1-x)^2}{1-y}\right). \]
The difficult factor in the denominator can thus be handled by carefully taking $\epsilon_1\rightarrow0$ at a rate that balances the integral manipulations and approximations above with the need to approximate the integrand by its leading series terms near $(1,1)$, ultimately giving
\[ s_n \sim \frac{4}{(2 \pi i)^2} \int_{\mN(1,1)} \frac{1+x}{2x^2y} \cdot \oS(x,y)^n dx dy . \]
Making the change of variables $x=e^{i\theta_1}$ and $y=e^{i\theta_2}$ and applying the saddle-point method then implies
\[ s_n \sim \frac{4}{(2 \pi)^2} \int_{\R^2} e^{n \log 4 - n(\theta_1^2/4 + \theta_2^2/2)} d\theta_1 d\theta_2 = \frac{4^n}{n} \cdot \frac{2\sqrt{2}}{\pi}. \]
\end{example}

In Section~\ref{sec:mainProof} we formalize the argument sketched in Example~\ref{ex:zerodrift} to prove Theorem~\ref{thm:main}. Computing higher order terms in the asymptotic expansion of $s_n$, and dealing with the general \emph{non-generic} case where less residues can be taken than expected, requires approximating saddle-point-like integrals with amplitudes that are non-singular at their saddle points. A method handling the general case will be detailed in a forthcoming paper, and we illustrate the approach in Example~\ref{ex:zerodrift2} of Section~\ref{sec:conclusion} to show that
\[ s_n = \frac{4^n}{n} \left(\frac{2\sqrt{2}}{\pi} + \frac{1}{\sqrt{\pi}}n^{-1/2} + O(n^{-1})\right) \]
for the lattice path model in Example~\ref{ex:zerodrift}.

\section{Asymptotics of Zero Drift Mostly Symmetric Models}
\label{sec:mainProof}
To prove Theorem~\ref{thm:main} we now fix a mostly symmetric zero drift step set $\mS$. The diagonal representation~\eqref{eq:mostlyDiag}, combined with the Cauchy integral formula, implies that
\[ s_n = \frac{1}{(2 \pi i)^{d+1}} \int_{T \times C^{\textsl{in}}} \frac{ G(\zz,t) }{H(\zz,t) } \frac{d\zz d t}{(z_1 \cdots z_d \, t)^{n+1}} \]
where
\begin{align*} 
T &= T_\epsilon= \{\zz\in\C^d : |z_k|=1 \text{ for all } 1 \leq k \leq d-1 \text{ and } |z_d|=1-\epsilon\} \\[+2mm]
C^{\textsl{in}} &= C^{\textsl{in}}_\gamma = \{t\in\C : |t| = S(\one)^{-1}(1-\gamma) \}
\end{align*}
for any $0<\epsilon,\gamma<1$, the numerator $G$ satisfies
\[ G(\bz,t) = (1+z_1) \cdots (1+z_{d-1}) \left( 1- tz_1 \cdots z_d \left(Q(\htbz) + 2 z_d A(\htbz) \right) \right) ,\]
and the denominator $H$ can be written $H=H_1H_2H_3$ for
\begin{align*}
H_1(\zz,t) &= 1-z_d \\
H_2(\zz,t) &= 1- tz_1 \cdots z_d \oS(\zz) \\
H_3(\zz,t) &= 1- tz_1 \cdots z_d \left(Q(\htbz) +  z_d A(\htbz) \right) .
\end{align*}
We make this choice of $T$ in order to stay in the domain of convergence of the series expansion under consideration, and because -- as shown in Melczer and Wilson~\cite[Proposition 4.2]{MelczerWilson2019} -- the point $(\one,S(\one))$ is the unique minimal zero of $H(\zz,t)$ with positive coordinates that minimizes $|z_1 \cdots z_d t|^{-1}$. It will also be of importance that the zeroes of $H_2$ with the same coordinate wise-modulus as $(\one,S(\one))$ are given by $(\ww,t)$ where $t = 1/(w_1\cdots w_d S(\ww,w_d))$ and $\ww$ is contained in the set 
\begin{align}
\label{eq:critical-point-set}
    \Gamma = \left\{(\htbw,w_d) : \htbw \in \{\pm1\}^{d-1}, \, w_d \in\{\pm1,\pm i\}, \quad \left|\frac{1}{w_1\cdots w_d S(\htbw,w_d)}\right| = \frac{1}{S(\one)} \right\};
\end{align}
see~\cite[Theorem 4.3]{MelczerWilson2019} (although we note that those authors forgot to allow the possibility that $w_d=\pm i$ in their argument).

\subsection{Residue Computations}

We use residue computations to simplify the integral under consideration. First, we prove that we can restrict our integral to neighbourhoods of points in $\Gamma$ and get an exponentially small error. Given $\ww \in \Gamma$, we define the product of circle arcs contained in $T$
\[ \mN_\ww = \mN^{\delta,\epsilon}_\ww = \{\zz\in\C^d:|z_j|=1 \text{ for $1\leq j\leq d-1$ with $|z_d| = 1-\epsilon$ and } |\arg(z_j)-\arg(w_j)|<\delta \text{ for all } j\},\] 
which will shrink to $\ww$ as $n\rightarrow\infty$, and let
\[ \mN = \bigcup_{\ww \in \Gamma}\mN_\ww. \]

\paragraph{Running assumptions on $\epsilon$ and $\delta$}
The asymptotic arguments below yield dominant asymptotics of $s_n$ when $\epsilon=n^{-\alpha}$ and $\delta=n^{-\beta}$ for positive constants $\alpha$ and $\beta$ such that 
\[ 1/2<\alpha<2\beta, \hspace{0.2in} \alpha+\beta>1, \hspace{0.1in}\text{and}\hspace{0.1in} 1/3 < \beta < 1/2,\]
and we now assume these inequalities hold. For concreteness, one can take $\epsilon = n^{-7/10}$ and $\delta = n^{-2/5}$.

\begin{lemma}
\label{lem:localize}
Under our running assumptions on $\epsilon$ and $\delta$, if $0<\gamma<1$ then
\[ s_n = \frac{1}{(2 \pi i)^{d+1}} \int_{\mN \times C^{\textsl{in}}} \frac{ G(\zz,t) }{H(\zz,t) } \frac{d\zz d t}{(z_1 \cdots z_d \, t)^{n+1}} + O(S(\one)^ne_n) \]
where $e_n\rightarrow0$ faster than any fixed power of $n$.
\end{lemma}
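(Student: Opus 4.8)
The plan is to show that the contribution to $s_n$ from the part of $T$ lying away from the points of $\Gamma$ is exponentially negligible relative to $S(\one)^n$, so that only the neighbourhoods $\mN_\ww$ survive. First I would fix the order of integration: perform the inner $t$-integral over $C^{\textsl{in}}$ first, for each fixed $\zz \in T$. Since $C^{\textsl{in}}$ has radius $S(\one)^{-1}(1-\gamma)$, the only denominator factor of $H$ that can vanish for $t$ of this modulus is $H_2$ (because $(\one,S(\one))$ is the unique minimal zero minimizing $|z_1\cdots z_dt|^{-1}$, and more precisely by the description of the zeroes of $H_1,H_2,H_3$ one checks $H_1$ is independent of $t$ while $H_3=0$ forces $|t|$ strictly larger on the relevant torus). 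Thus the inner integral is a standard one-variable residue/bounding computation: away from the zero set of $H_2$ the integrand is $O(1)$ uniformly and the $t^{-(n+1)}$ factor contributes $(S(\one)/(1-\gamma))^{n}$, which is \emph{larger} than $S(\one)^n$, so one cannot simply bound termwise — instead I would deform $C^{\textsl{in}}$ outward to radius $S(\one)^{-1}(1+\gamma')$, picking up a residue at the pole $t = 1/(z_1\cdots z_d\oS(\zz))$ of $H_2$, exactly as in the sketch of Example~\ref{ex:zerodrift}; the deformed integral is then genuinely $O((S(\one)/(1+\gamma'))^n)$, i.e. exponentially smaller than $S(\one)^n$, uniformly in $\zz \in T$.

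After this reduction the problem becomes: bound
\[
\frac{1}{(2\pi i)^d}\int_{T\setminus\mN} \operatorname{Res}_{t}\!\left(\frac{G(\zz,t)}{H(\zz,t)(z_1\cdots z_dt)^{n+1}}\right)d\zz,
\]
and the residue is a smooth function on $T\setminus\mN$ (the factor $1-z_d$ is bounded away from $0$ there since $|z_d|=1-\epsilon<1$, and $H_3$ does not vanish on the relevant locus) times $\oS(\zz)^{\,n}\cdot(\text{polynomial})$ — more precisely the $n$-dependent part is $\big(z_1\cdots z_d\cdot z_1\cdots z_d\oS(\zz)\big)^{-(n+1)}$ evaluated appropriately, whose modulus I would write as $\exp\!\big(n\,h(\zz)\big)$ for a real function $h$. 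The key analytic input is that $h$ attains its maximum $\log S(\one)$ on $T$ precisely at the points of $\Gamma$ (this is the content of the minimality statement quoted from Melczer--Wilson, adjusted for the $|z_d|=1-\epsilon$ perturbation), and that this maximum is \emph{strict and quadratically nondegenerate} transverse to $\Gamma$. Consequently on $T\setminus\mN$, where every $z_j$ is at angular distance at least $\delta$ from the corresponding $w_j$, we have $h(\zz)\le \log S(\one) - c\delta^2$ for some constant $c>0$ (using $\delta\to 0$ and a second-order Taylor estimate, valid since $\delta=n^{-\beta}\to 0$). Integrating, the contribution of $T\setminus\mN$ is $O\!\big(S(\one)^n e^{-cn\delta^2}\big)=O\!\big(S(\one)^n e^{-cn^{1-2\beta}}\big)$, and since $\beta<1/2$ the exponent $n^{1-2\beta}\to\infty$, so this is $O(S(\one)^n e_n)$ with $e_n$ going to zero faster than any power of $n$. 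One must also absorb the small perturbation coming from $|z_d|=1-\epsilon$ rather than $|z_d|=1$: the exponential rate picks up a factor $(1-\epsilon)^{-(n+1)}=e^{\epsilon n + O(\epsilon^2 n)}$, and since $\epsilon n = n^{1-\alpha}$ with $\alpha>1/2$, this is subexponential and is dominated by $e^{cn^{1-2\beta}}$ precisely because $\alpha>1/2\ge 1-2\beta$ fails in general — here one uses $2\beta>\alpha$, i.e. $n^{1-2\beta}=o(\text{nothing})$... so instead one compares $\epsilon n$ against $n\delta^2 = n^{1-2\beta}$: the running assumption $\alpha < 2\beta$ is exactly what guarantees $\epsilon n = n^{1-\alpha}$ is of smaller order than $n\delta^2$... wait, $1-\alpha$ vs $1-2\beta$: $\alpha<2\beta \iff 1-\alpha > 1-2\beta$, so $\epsilon n$ dominates $n\delta^2$. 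The correct comparison is that both $\epsilon n$ and $n\delta^2$ are powers of $n$ with exponent in $(0,1)$, and one needs the \emph{gain} $e^{-cn\delta^2}$ from being away from $\Gamma$ to beat the \emph{loss} $e^{+C\epsilon n}$ from the radial shrink: this requires $n\delta^2 \gg \epsilon n$ after accounting for constants, which holds when $1-2\beta$ and $1-\alpha$... I would instead argue more carefully that the loss from the $z_d$-shift is itself localized — on $T\setminus\mN$ one already has the $\delta^2$ gain in the \emph{angular} directions, and the radial factor $(1-\epsilon)^{-(n+1)}$ multiplies \emph{everything} including the main term, so it is not a competing error but a common prefactor that is tracked (and ultimately cancelled in later sections); the genuine error term is $e^{-cn\delta^2}\cdot(1-\epsilon)^{-(n+1)}$ relative to the main term $\asymp n^{-(\text{power})}(1-\epsilon)^{-(n+1)}$, so the ratio is $e^{-cn^{1-2\beta}}$ as claimed.

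The main obstacle I anticipate is precisely this bookkeeping: making the comparison between the angular localization scale $\delta$, the radial perturbation scale $\epsilon$, and the two-sided residue/deformation estimates fully rigorous and uniform, rather than the clean one-point picture of Example~\ref{ex:smooth}. In particular one must verify that the outward deformation of $C^{\textsl{in}}$ is legitimate uniformly for $\zz$ ranging over all of $T$ (not just near $\Gamma$) — i.e. that no other pole of the $t$-integrand is crossed — which is where the full strength of the Melczer--Wilson minimality result (Proposition 4.2 and Theorem 4.3 of~\cite{MelczerWilson2019}, including the corrected $w_d=\pm i$ cases enumerated in~\eqref{eq:critical-point-set}) is needed, together with a compactness argument to turn pointwise strict inequalities into a uniform exponential gap. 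Everything else — the residue computation, the Taylor estimate for $h$ near each $\ww\in\Gamma$, and summing the finitely many neighbourhood contributions — is routine.
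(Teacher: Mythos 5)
Your plan to deform $C^{\textsl{in}}$ outward first and then bound the residue over $T\setminus\mN$ is a defensible reordering of the paper's steps (the paper localizes first and only deforms in Lemma~\ref{lem:outer} and Proposition~\ref{prop:resint}), but your execution contains a computational error that then derails the entire second half of the argument. You write the $n$-dependent part of the residue as $\bigl(z_1\cdots z_d\cdot z_1\cdots z_d\oS(\zz)\bigr)^{-(n+1)}$; this is what you would get from substituting $t=z_1\cdots z_d\oS(\zz)$, but the pole of $H_2$ sits at $t=1/(z_1\cdots z_d\oS(\zz))$, so $z_1\cdots z_d\,t=1/\oS(\zz)$ and the $n$-dependent factor is $\oS(\zz)^{n+1}$. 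The $(z_1\cdots z_d)^{-(n+1)}$ from the Cauchy kernel is cancelled exactly, and there is no $(1-\epsilon)^{-(n+1)}$ prefactor at all. The long struggle over whether $\epsilon n$ or $n\delta^2$ wins, and the proposed escape of treating $(1-\epsilon)^{-n}$ as ``a common prefactor tracked and ultimately cancelled,'' are therefore chasing a phantom; moreover, the escape is untenable on its face, since Lemma~\ref{lem:localize} together with Propositions~\ref{prop:leadingsaddle}--\ref{prop:leadingasm} forces the $\mN$-integral to be $\asymp S(\one)^n n^{-d/2}$ with no super-polynomial prefactor, so no such factor can be ``tracked.''

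After the correct residue, what the radial shrink $|z_d|=1-\epsilon$ actually costs is a change in $\log|\oS(\zz)|$ of size $O(\epsilon\theta_d)+O(\epsilon^2)$, not $O(\epsilon)$: the linear-in-$\epsilon$ term vanishes precisely because $A(\one)=B(\one)$ in the zero-drift case (this cancellation is computed explicitly in the proof of Proposition~\ref{prop:leadingsaddle}). These errors are dominated by the quadratic gain $\gtrsim n\delta^2$ on $T\setminus\mN$ under the running assumptions, using $\alpha>\beta$ (so $\epsilon\ll\delta$), $\alpha+\beta>1$ (so $n\epsilon\delta\to 0$), and $\alpha>1/2$ (so $n\epsilon^2\to 0$); the constraint $\alpha<2\beta$ that you kept reaching for is needed only later in Proposition~\ref{prop:leadingsaddle}, not in this lemma. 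The paper sidesteps the entire residue bookkeeping here via the observation you dismissed as impossible: for fixed $\zz\in T$, the inner $t$-integral over $C^{\textsl{in}}$ equals $[t^n]F(\zz,t)$, a genuine Taylor coefficient whose size is governed by the nearest singularity $|\oS(\zz)|^n$ rather than by the naive supremum bound $\bigl(S(\one)/(1-\gamma)\bigr)^n$. Two smaller points: you would also need to treat carefully the fact that the residue is captured only for those $\zz$ whose $H_2$-pole lies between the two circles (for $\zz$ far from $\Gamma$ the pole escapes past $C^{\textsl{out}}$ and there is nothing to bound, which is fine but should be said); and $T\setminus\mN$ requires that for each $\ww\in\Gamma$ \emph{at least one} coordinate, not every coordinate, be at angular distance $\geq\delta$ from $w_j$, which still suffices since one bad coordinate yields the quadratic gain.
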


\begin{proof}
By our Cauchy integral representation for $s_n$, it suffices to bound
\begin{equation*} 
\frac{1}{(2 \pi i)^{d+1}} \int_{(T\setminus \mN) \times C^{\textsl{in}}} \frac{ G(\zz,t) }{H(\zz,t) } \frac{d\zz d t}{(z_1 \cdots z_d \, t)^{n+1}} 
\end{equation*}
for suitable values of $\epsilon,\delta,$ and $\gamma$. First, we note that if $\zz \in T$ is fixed then $C^{\textsl{in}}$ is contained in the power series domain of convergence of $G(\zz,t)/H(\zz,t)$, so
\begin{equation*}
    \frac{1}{2 \pi i} \int_{(T\setminus \mN) \times C^{\textsl{in}}} \frac{ G(\zz,t) }{H(\zz,t) } \frac{d\zz d t}{(z_1 \cdots z_d \, t)^{n+1}} = \int_{(T\setminus \mN)} \left[ \frac{1}{2 \pi i}\int_{C^{\textsl{in}}} \frac{G(\zz,t)}{H(\zz,t)} \frac{ d t}{t^{n+1}} \right] \frac{d\zz }{(z_1 \cdots z_d )^{n+1}},
\end{equation*}
where
\[ \frac{1}{2 \pi i}\int_{C^{\textsl{in}}} \frac{ G(\zz,t) }{H(\zz,t) } \frac{ d t}{t^{n+1}} = [t^n] F(\zz,t) \]
is the coefficient of $t^n$ when $F(\zz,t)=G(\zz,t)/H(\zz,t)$ is expanded as a power series. When each coordinate $z_j$ has unit modulus and $\zz$ is bounded away from the elements of $\Gamma$ then $Q(\htbz)+z_d A(\htbz)$ and $\oS(\zz)$ are less than $S(\one)$ (see~\cite[Proposition 4.2]{MelczerWilson2019}) so that $[t^n] F(\zz,t)= O(\tau^n)$ for some $0<\tau<S(\one)$. In particular, \emph{if $\delta$ was a fixed constant} then 
\begin{align*} 
\left| \frac{1}{(2 \pi i)^{d+1}} \int_{(T\setminus \mN) \times C^{\textsl{in}}} \frac{ G(\zz,t) }{H(\zz,t) } \frac{d\zz d t}{(z_1 \cdots z_d \, t)^{n+1}}\right|
&\leq  \frac{1}{(2 \pi)^{d}} \int_{(T\setminus \mN)} \left|[t^n] F(\zz,t)\right| \cdot \left|z_1 \cdots z_{d-1}z_d\right|^{-n-1} d\zz 
\end{align*}
grows exponentially slower than $S(\one)^n$ as $|z_j|=1$ for all $1 \leq j \leq d-1$ and $|z_d|=1-n^{-\alpha}$ for $\zz\in T$.

In actuality $\delta=n^{-\beta}\rightarrow0$ as $n\rightarrow\infty$ and we need to show that $\delta$ does not go to zero too quickly. By the definition of $\Gamma$, the monomials of $\oS(\zz)$ all have the same argument at $\zz=\ww$ for $\ww \in \Gamma$, so 
\[ |Q(\htbz)+z_d A(\htbz)| < |\oz_d B(\htbz) + Q(\htbz)+z_d A(\htbz)| = |\oS(\zz)| \]
for $\zz$ sufficiently close to the elements of $\Gamma$. Because we can reduce to any arbitrarily small neighbourhoods of the elements of $\Gamma$ while introducing only an exponentially smaller error, we can thus assume that we are in a small enough neighbourhood such that $\left|[t^n] F(\zz,t)\right| = O(|\oS(\zz)|^n)$. We bound $|\oS(\zz)|^n$ for $\zz$ near $\one \in \Gamma$, with the argument for the other elements $\ww\in\Gamma$ being analogous. Writing $z_j=e^{i\theta_j}$ for $1 \leq j \leq d-1$ and $z_d = (1-\epsilon)e^{i\theta}$ we have for $\theta_j$ sufficiently small that
\begin{align*}
\oS(\zz)^n 
&= \exp{n \log \oS\left(e^{i\hat{\btheta}},(1-\epsilon)e^{i\theta_d})\right)} \\
&= S(\one)^n \cdot \exp{n \log \oS\left(e^{i\btheta}\right) - n \log S(\one) + O(n\epsilon\theta_1) + \cdots + O(n\epsilon\theta_{d-1}) + O(n\epsilon^2)} \\[+2mm]
&= S(\one)^n \cdot \exp{-n(c_1\theta_1^2 + \cdots + c_d\theta_d^2) + O(n(\theta_1+\cdots + \theta_d)^3) + O(n\epsilon\theta_1) + \cdots + O(n\epsilon\theta_{d-1}) + O(n\epsilon^2)},
\end{align*}
where the second equality follows from the expansion of $\log \oS(e^{i\hat{\btheta}},(1-\epsilon)e^{i\theta_d})$ derived in the proof of Proposition~\ref{prop:leadingsaddle} below, and the $c_j$ are positive constants computed in the proof of Proposition~\ref{prop:leadingasm} below. When $|\theta_j|\geq\delta$ then, under our assumptions on $\epsilon=n^{-\alpha}$ and $\delta=n^{-\beta}$ and the fact that we can restrict the $\theta_j$ to any arbitrarily small neighbourhood of the origin, the big-O terms in the exponential are bounded below the quadratic term so 
\[ \left|[t^n] F(\zz,t)\right| = O(|\oS(\zz)|^n) = O(S(\one)^n e^{-Kn^{\rho}}) \]
for some $K,\rho>0$. The claimed result holds as $e^{-Kn^{\rho}}\rightarrow0$ faster than any fixed power of~$n$. 
\end{proof}

Having localized near particular singularities of interest, we now introduce an asymptotically negligible integral that allows us to make our desired residue computation. Let 
\[ C^{\textsl{out}} = C^{\textsl{out}}_\gamma = \{t\in\C : |t| = S(\one)^{-1}(1+\gamma) \}. \]

\begin{lemma}
\label{lem:outer}
Using the notation and running assumptions above, if $\gamma>0$ is sufficiently small then
\[  \frac{1}{(2 \pi i)^{d+1}} \int_{\mN \times C^{\textsl{out}}} \frac{ G(\zz,t) }{H(\zz,t) } \frac{d\zz d t}{(z_1 \cdots z_d \, t)^{n+1}} = O(\tau^n) \]
for some $0<\tau<S(\one)$. 
\end{lemma}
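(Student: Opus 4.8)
\emph{Plan.} Since the integrand $\frac{G(\zz,t)}{H(\zz,t)}(z_1\cdots z_d t)^{-n-1}$ is holomorphic on the compact cycle $\mN \times C^{\textsl{out}}$ once we know $H$ does not vanish there, the strategy is a crude ``$ML$'' estimate: the modulus of the integral is at most $(2\pi)^{-(d+1)}$ times the measure of $\mN\times C^{\textsl{out}}$, times $\sup_{\mN\times C^{\textsl{out}}}|G/H|$, times $\sup_{\mN\times C^{\textsl{out}}}|z_1\cdots z_d t|^{-n-1}$. I will show the last factor is $S(\one)^{n}$ damped by a genuine geometric factor $\lambda^n$ with $\lambda<1$, which swamps the at-most-polynomial-in-$n$ contributions of the other two.

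\emph{Non-vanishing of $H$ (the crux).} The heart of the argument is to show $H=H_1H_2H_3\neq 0$ on $\mN\times C^{\textsl{out}}$ for all large $n$ and all sufficiently small $\gamma$, with effective lower bounds. Since $|z_d|=1-\epsilon$ on $\mN$, we get $|H_1|=|1-z_d|\geq 1-|z_d|=\epsilon$ near the points $\ww\in\Gamma$ with $w_d=1$, and $|H_1|$ is bounded below by a positive constant near the remaining points of $\Gamma$, so $1/|H_1|=O(n^{\alpha})$. For $H_2=1-tz_1\cdots z_d\oS(\zz)$ note that $|t|=(1+\gamma)S(\one)^{-1}$ on $C^{\textsl{out}}$, while on $\mN$ we have $|z_1\cdots z_{d-1}|=1$, $|z_d|=1-\epsilon$, and---because $\mN$ contracts onto $\Gamma$ as $n\to\infty$ and $|\oS(\ww)|=S(\one)$ for every $\ww\in\Gamma$ (the zeros of $H_2$ with extremal coordinate-modulus being exactly the $(\ww,t)$ with $\ww\in\Gamma$, so $|\oS(\ww)|\,|w_1\cdots w_d|\,|t|=1$ forces $|\oS(\ww)|=S(\one)$)---continuity gives $|\oS(\zz)|=S(\one)(1+o(1))$ uniformly on $\mN$. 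Hence $|tz_1\cdots z_d\oS(\zz)|=(1+\gamma)(1-\epsilon)(1+o(1))\in[\,1+\tfrac{\gamma}{2},\,1+\gamma\,]$ for $n$ large, so $|H_2|\geq\gamma/2$. Finally, for $H_3=1-tz_1\cdots z_d(Q(\htbz)+z_dA(\htbz))$ I use the strict inequality $|Q(\htbz)+z_dA(\htbz)|<|\oS(\zz)|$ valid near $\Gamma$ (the inequality already exploited in the proof of Lemma~\ref{lem:localize}, and ultimately from \cite[Proposition 4.2]{MelczerWilson2019}), which by compactness yields $|Q(\htbz)+z_dA(\htbz)|\leq S(\one)-c$ on $\mN$ for a fixed $c>0$; then $|tz_1\cdots z_d(Q+z_dA)|\leq(1+\gamma)(1-\epsilon)(1-c/S(\one))\leq 1-c'$ for a fixed $c'>0$, \emph{provided $\gamma$ is small enough that $(1+\gamma)(1-c/S(\one))<1$}, so $|H_3|\geq c'$. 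This last requirement is exactly what forces the hypothesis ``$\gamma$ sufficiently small''.

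\emph{Conclusion.} Granting the above, $|G/H|=O(n^{\alpha})$ on $\mN\times C^{\textsl{out}}$ since $G$ is a polynomial evaluated on a bounded set, and on the cycle $|z_1\cdots z_d t|=(1-\epsilon)(1+\gamma)S(\one)^{-1}$, so $|z_1\cdots z_d t|^{-n-1}=S(\one)^{n+1}(1-\epsilon)^{-n-1}(1+\gamma)^{-n-1}$. Because $\alpha>0$ we have $(n+1)\epsilon=n^{1-\alpha}(1+o(1))=o(n)$, whence $(1-\epsilon)^{-n-1}=e^{o(n)}$, negligible next to the honest geometric factor $(1+\gamma)^{-n-1}$, so this supremum is at most $S(\one)^{n}(1+\gamma)^{-n/2}$ for $n$ large. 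The cycle has measure $O(\delta^{d})=O(n^{-\beta d})$ (each of its $d$ arc-widths equals $2\delta$, and $C^{\textsl{out}}$ has bounded length). Multiplying the three estimates, the integral is $O\!\big(n^{\alpha-\beta d}\,S(\one)^{n}\,(1+\gamma)^{-n/2}\big)$, and absorbing the polynomial factor into the exponential gives $O(\tau^n)$ for any fixed $\tau$ with $S(\one)(1+\gamma)^{-1/2}<\tau<S(\one)$---a non-empty range since $\gamma>0$. The only step beyond routine contour estimation is the non-vanishing of $H$ on $\mN\times C^{\textsl{out}}$, and within that the two subtle points are that $C^{\textsl{out}}$ sits outside the dominant-singularity circle by a \emph{fixed} margin $\gamma$ which eventually dominates the $n$-dependent contraction $\epsilon=n^{-\alpha}$ (needed to keep $H_2$ away from zero), and that taking $\gamma$ small keeps $C^{\textsl{out}}$ inside the region where $H_3$ cannot vanish.
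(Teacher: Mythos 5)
Your proof is correct and follows essentially the same route as the paper's: a sup-times-measure (``$ML$'') estimate on the cycle $\mN\times C^{\textsl{out}}$, with the two key observations being that $|z_1\cdots z_d\, t|^{-1} = S(\one)\bigl((1-\epsilon)(1+\gamma)\bigr)^{-1}$ eventually lies strictly below $S(\one)$ once the fixed margin $\gamma$ dominates the shrinking $\epsilon=n^{-\alpha}$, and that the only factor of $H$ that can become small on the cycle is $H_1=1-z_d$, contributing a harmless polynomial blow-up $1/|H_1|=O(n^{\alpha})$. The paper condenses your explicit three-way lower bound on $H_1,H_2,H_3$ into the remark that the poles of $G/H$ with $\zz\in\mN$ have $|t|$ close to $S(\one)^{-1}$ or $(Q(\one)+A(\one))^{-1}$, both bounded away from $(1+\gamma)S(\one)^{-1}$ when $\gamma$ is small and fixed; your version spells out the same fact factor by factor, which is slightly more verbose but no different in substance.
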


\begin{proof}
Bounding the integrand under consideration by the supremum of its modulus, there exists a constant $K_1>0$ such that
\begin{equation} 
\left| \int_{\mN \times C^{\textsl{out}}} \frac{ G(\zz,t) }{H(\zz,t) } \frac{d\zz d t}{(z_1 \cdots z_d \, t)^{n+1}}\right| 
\leq K_1 \sup_{(\zz,t) \in \mN \times C^{\textsl{out}}} \left| \frac{ G(\zz,t) }{H(\zz,t) } \right| \left[ \frac{S(\one)}{(1-\epsilon) (1 + \gamma)} \right]^n.
\label{eq:outer}
\end{equation}
When $\epsilon$ and $\delta$ are sufficiently small then, by the definition of $\Gamma$, the only poles of $F(\zz,t) = G(\zz,t)/H(\zz,t)$ with $\zz \in \mN$ have $|t|$ sufficiently close to $S(\one)^{-1}$ or $(Q(\one) + A(\one))^{-1}$. In particular, if $\gamma$ is fixed sufficiently small and~$\epsilon$ and $\delta$ are sufficiently smaller than $\gamma$ then there exists $K_2>0$ such that
\[ \sup_{(\zz,t) \in \mN \times C^{\textsl{out}}} \left| \frac{ G(\zz,t) }{H(\zz,t) } \right| \leq K_2 \sup_{|z_d|=1-\epsilon} \left|\frac{1}{1-z_d}\right| = O(n^{\alpha}), \]
since $\epsilon=n^{-\alpha}$. The right-hand side of~\eqref{eq:outer} thus has exponential growth smaller than $S(\one)$ whenever $(1- \epsilon) (1 + \gamma) >1$, which occurs for all sufficiently large $n$ under our assumptions.
\end{proof}

Our last result before computing residues shows that the only poles of our integrand that affect our calculations come from the denominator factor $H_2$ vanishing.

\begin{lemma}
\label{lem:otherfactors}
The polynomial $H_3(\zz,t) = 1- tz_1 \cdots z_d \left(Q(\htbz) +  z_d A(\htbz) \right)$ is non-zero when $|z_j| \leq 1$ for all $1 \leq j \leq d$ and $|t| \leq S(\one)$. If $\delta$ and $\epsilon$ are sufficiently small then $B(\htbz)\neq0$ for $\htbz \in \mN_\ww$.
\end{lemma}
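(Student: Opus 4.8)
The plan is to prove the two claims separately, since they concern different denominator factors. For the first claim, I would show that $tz_1\cdots z_d(Q(\htbz)+z_dA(\htbz))$ has modulus strictly less than $1$ on the prescribed polydisc, so that $H_3$ cannot vanish there. When $|z_j|\le 1$ for all $j$ and $|t|\le S(\one)$, the triangle inequality gives $|tz_1\cdots z_d(Q(\htbz)+z_dA(\htbz))| \le S(\one)\cdot|Q(\htbz)+z_dA(\htbz)|$, so it suffices to show $|Q(\htbz)+z_dA(\htbz)| < 1/S(\one)$ — equivalently, that the sum of the weights of the steps with nonnegative $z_d$-coordinate, evaluated at the $z_j$, is less than $S(\one)$ in modulus. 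Since $Q+z_dA$ collects only a proper subset of the monomials of $S(\bz) = \oz_dB(\htbz)+Q(\htbz)+z_dA(\htbz)$ (the model takes a backward step in the $d$th coordinate, so $B\neq 0$ and the omitted $\oz_d B$ part is nontrivial), and all weights are positive, we have $|Q(\htbz)+z_dA(\htbz)| \le S(|z_1|,\dots,|z_d|) - $ (contribution of the $\oz_d B$ terms) $< S(\one)$ when the $|z_j|\le 1$, with the strict inequality coming from the positivity of the omitted terms; a touch more care at the boundary $|z_j|=1$ shows the product with $t$ still cannot reach $1$ unless every inequality is tight, which forces $|t|=S(\one)$, all $|z_j|=1$, all monomials of $Q+z_dA$ aligned in argument, and the $\oz_dB$ contribution to vanish — impossible since $B\neq 0$. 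This is essentially the content of \cite[Proposition 4.2]{MelczerWilson2019} applied to the sub-polynomial $Q+z_dA$, and I would cite it.

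For the second claim, I need $B(\htbz)\neq 0$ for $\htbz$ coming from a point $\zz\in\mN_\ww$ with $\ww\in\Gamma$, once $\delta,\epsilon$ are small. The key observation is that for $\ww\in\Gamma$ the first $d-1$ coordinates satisfy $\htbw\in\{\pm1\}^{d-1}$, and at such points all monomials of the symmetric Laurent polynomial $B(\htbz)$ — which has positive weights — take the same argument (each monomial $\bz^{\bi}$ with $\bi\in\{\pm1,0\}^{d-1}$ evaluates to $\pm1$, and by the symmetry of $B$ the reflected monomials appear in pairs that reinforce rather than cancel; more directly, $B(\htbw)$ is a nonzero real number since $B$ is symmetric and $\htbw$ real with unit coordinates). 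Hence $B(\htbw)\neq 0$, and by continuity $B$ is bounded away from $0$ on a fixed neighbourhood of $\htbw$; choosing $\delta$ and $\epsilon$ small enough that $\mN_\ww$ projects into that neighbourhood for every $\ww\in\Gamma$ (a finite set) gives the claim. I would note that under the running assumptions $\delta=n^{-\beta}\to 0$, so this holds for all large $n$.

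The main obstacle is the strictness in the first claim on the torus boundary $|z_j|=1$: one must rule out the degenerate possibility that $|t(z_1\cdots z_d)(Q(\htbz)+z_dA(\htbz))|=1$ with all moduli equal to $1$. The cleanest route is to invoke the characterization already established in \cite[Proposition 4.2]{MelczerWilson2019} (and restated in the paragraph following the definition of $\Gamma$), which says the analogous quantity for $\oS$ attains $S(\one)$ in modulus on the unit torus exactly on $\Gamma$; since $Q+z_dA$ is a strict sub-sum of $S$ with positive weights and $B\neq 0$, the triangle inequality for $Q+z_dA$ is strict wherever that for $\oS$ would be tight, so $|Q(\htbz)+z_dA(\htbz)|<S(\one)$ throughout $|z_j|\le 1$, and combined with $|t|\le S(\one)$ we get $|tz_1\cdots z_d(Q+z_dA)|<1$. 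The rest is routine.
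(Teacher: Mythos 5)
Your first claim proceeds in the same spirit as the paper (bound $|Q(\htbz)+z_dA(\htbz)|$ by $Q(\one)+A(\one)<S(\one)$ via the triangle inequality and positivity of the weights), although you got tangled by a typo in the lemma statement: the hypothesis should read $|t|\le S(\one)^{-1}$, not $|t|\le S(\one)$, which explains the internal inconsistency where you write ``it suffices to show $|Q+z_dA|<1/S(\one)$'' and then immediately assert that this is ``equivalently\dots less than $S(\one)$.'' With the corrected bound on $|t|$, the paper's (and your) estimate $|Q(\htbz)+z_dA(\htbz)|\le Q(\one)+A(\one)<S(\one)$ finishes instantly; no separate equality-case analysis on the boundary is needed, since the strict inequality $Q(\one)+A(\one)<S(\one)$ already comes from $B(\one)>0$ before one ever multiplies by $t$.

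Your second claim, however, has a genuine gap. You assert that $B(\htbw)\ne 0$ follows because $\htbw\in\{\pm1\}^{d-1}$, $B$ has positive weights, and the reflection symmetry makes the monomial values ``reinforce rather than cancel.'' This is not true: reflection symmetry only guarantees that monomials in the \emph{same} reflection orbit take the same sign at $\htbw$; distinct orbits can still take opposite signs and cancel. For instance, with $B(z_1,z_2)=z_1+\oz_1+z_2+\oz_2$ one has $B(1,-1)=0$, even though $B$ is symmetric with positive coefficients. Thus ``$B(\htbw)$ is a nonzero real number since $B$ is symmetric and $\htbw$ real with unit coordinates'' is simply false as a general statement about such $\htbw$. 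What rescues the claim is the constraint defining $\Gamma$, which you state but never use: $\ww\in\Gamma$ forces $|S(\ww)|=S(\one)$, and since $|S(\ww)|\le |A(\htbw)|+|Q(\htbw)|+|B(\htbw)|\le A(\one)+Q(\one)+B(\one)=S(\one)$, equality throughout implies $|B(\htbw)|=B(\one)>0$. This is exactly the paper's route (``$B(\htbz)$ does not vanish whenever $\htbz$ is a maximizer of $|\oS|$ on the unit torus''), and it is the step your proof is missing. Once $B(\htbw)\ne0$ for each of the finitely many $\ww\in\Gamma$ is established, your concluding continuity argument is fine.
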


\begin{proof}
Note that $\oS(\bz) = S(z_1,\dots,z_{d-1},\oz_d) = z_d A\left(\htbz\right) + Q\left(\htbz\right) + \oz_d B\left(\htbz\right)$ so
\[ \left|Q(\htbz) +  z_d A(\htbz)\right| \leq Q(\one) + A(\one) < S(\one) \]
when  $|z_j| \leq 1$ for all $1 \leq j \leq d$. In particular, if $|z_j| \leq 1$ for all $1 \leq j \leq d$ and $H_3(\zz,t)=0$ then
\[ |t| = \left|\frac{1}{z_1 \cdots z_d \left(Q(\htbz) +  z_d A(\htbz) \right)}\right| > \frac{1}{S(\one)}. \]

To prove that $B(\htbz)\neq0$ for $\htbz \in \mN_\ww$ we note that $B(\htbz)$ does not vanish whenever $\htbz$ is a maximizer of~$|\oS(\htbz)|$ on the unit torus. The points of $\Gamma$ get arbitrarily close to these maximizers as $\epsilon\rightarrow0$, so by continuity $B(\htbz)$ does not vanish for $\htbz \in \mN_\ww$ whenever $\delta$ and $\epsilon$ are sufficiently small.
\end{proof}

We are now ready to determine a residue integral expression for $s_n$.

\begin{proposition}
\label{prop:resint}
With the notations and assumptions above,
\begin{equation}
s_n = \sum_{\ww\in\Gamma} \frac{1}{(2 \pi i)^d} \int_{\mN_\ww} \frac{(1+z_1)\cdots(1+z_{d-1})}{B\left(\htbz\right)(z_1\cdots z_d)} \cdot \frac{B\left(\htbz\right) - z_d^2 A\left(\htbz\right)}{1-z_d} \cdot \oS(\zz)^n \, d\zz + O((S(\one)^ne_n)
\label{eq:resInt}
\end{equation}
where $e_n\rightarrow0$ faster than any fixed power of $n$.
\end{proposition}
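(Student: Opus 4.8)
The plan is to start from the localized integral provided by Lemma~\ref{lem:localize}, namely
\[ s_n = \frac{1}{(2\pi i)^{d+1}}\int_{\mN\times C^{\textsl{in}}}\frac{G(\zz,t)}{H(\zz,t)}\frac{d\zz\,dt}{(z_1\cdots z_d\,t)^{n+1}} + O(S(\one)^n e_n), \]
and to evaluate the inner $t$-integral by a residue computation at the pole coming from $H_2$. First I would observe that for each fixed $\zz\in\mN$, the only singularities of $F(\zz,t)$ in the annulus $S(\one)^{-1}(1-\gamma)\le |t|\le S(\one)^{-1}(1+\gamma)$ are the simple pole $t = 1/(z_1\cdots z_d\oS(\zz))$ coming from $H_2$ and possibly a pole from $H_3$; by Lemma~\ref{lem:otherfactors} the $H_3$ pole has modulus strictly exceeding $S(\one)^{-1}$, so shrinking $\gamma$ keeps it outside $C^{\textsl{out}}$, and the $H_1$ factor $1-z_d$ never vanishes on $\mN$ since $|z_d|=1-\epsilon<1$. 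Hence, by the residue theorem applied to the inner integral over the annulus bounded by $C^{\textsl{in}}$ and $C^{\textsl{out}}$,
\[ \frac{1}{2\pi i}\int_{C^{\textsl{out}}}\!-\!\frac{1}{2\pi i}\int_{C^{\textsl{in}}} = \operatorname{Res}_{t = 1/(z_1\cdots z_d\oS(\zz))}. \]
Combining this with Lemma~\ref{lem:outer}, which shows the $C^{\textsl{out}}$ integral is $O(\tau^n)$ with $\tau<S(\one)$, absorbs that term into the error and leaves $s_n$ equal to a sum over $\ww\in\Gamma$ of the residue integral over $\mN_\ww$, up to the stated error.

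Next I would compute the residue explicitly. Since $H_2(\zz,t) = 1 - tz_1\cdots z_d\oS(\zz)$ has $\partial_t H_2 = -z_1\cdots z_d\oS(\zz)$, at the pole $t_\ast = 1/(z_1\cdots z_d\oS(\zz))$ we get
\[ \operatorname{Res}_{t=t_\ast}\frac{G(\zz,t)}{H_1 H_2 H_3\cdot t^{n+1}} = \frac{G(\zz,t_\ast)}{H_1(\zz)\,H_3(\zz,t_\ast)\,(-z_1\cdots z_d\oS(\zz))\,t_\ast^{n+1}}, \]
and $t_\ast^{-n-1} = (z_1\cdots z_d)^{n+1}\oS(\zz)^{n+1}$, so the $(z_1\cdots z_d)^{-n-1}$ from the Cauchy kernel cancels the $(z_1\cdots z_d)^{n+1}$ here, leaving an overall $\oS(\zz)^n$ with prefactors. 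The remaining bookkeeping is to simplify $G(\zz,t_\ast)/(H_3(\zz,t_\ast)\cdot(1-z_d)\cdot(z_1\cdots z_d))$ into the claimed amplitude $\frac{(1+z_1)\cdots(1+z_{d-1})}{B(\htbz)(z_1\cdots z_d)}\cdot\frac{B(\htbz)-z_d^2 A(\htbz)}{1-z_d}$. Using $\oS(\zz) = \oz_d B(\htbz)+Q(\htbz)+z_d A(\htbz)$, one substitutes $t_\ast$ into $1-tz_1\cdots z_d(Q(\htbz)+2z_d A(\htbz))$ and into $H_3 = 1-tz_1\cdots z_d(Q(\htbz)+z_d A(\htbz))$; after clearing the common denominator $\oS(\zz)$ these become $(\oS(\zz)-Q(\htbz)-2z_dA(\htbz))/\oS(\zz) = (\oz_dB(\htbz)-z_dA(\htbz))/\oS(\zz)$ and $(\oz_dB(\htbz))/\oS(\zz)$ respectively, so the ratio $G/H_3$ picks up $(\oz_d B(\htbz)-z_d A(\htbz))/(\oz_d B(\htbz)) = (B(\htbz)-z_d^2 A(\htbz))/B(\htbz)$ — this is where the claimed numerator $B(\htbz)-z_d^2 A(\htbz)$ and denominator $B(\htbz)$ arise. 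Here Lemma~\ref{lem:otherfactors} is again needed to guarantee $B(\htbz)\ne 0$ on $\mN_\ww$ so the division is legitimate. Tracking the sign from the residue against the sign in the $C^{\textsl{out}}-C^{\textsl{in}}$ identity, and noting the $(2\pi i)$ bookkeeping drops the $t$-variable to give $(2\pi i)^d$, yields exactly~\eqref{eq:resInt}.

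I expect the main obstacle to be not any single computation but assembling the contour-deformation argument cleanly: one must justify that over the \emph{non-compact-looking but actually compact} domain $\mN$ (a product of circle arcs, closed), the inner $t$-integral can be deformed from $C^{\textsl{in}}$ to $C^{\textsl{out}}$ uniformly in $\zz$, which requires knowing that the only $t$-poles crossed are those from $H_2$ and that they are crossed transversally — i.e., that $1/(z_1\cdots z_d\oS(\zz))$ has modulus strictly between $S(\one)^{-1}(1-\gamma)$ and $S(\one)^{-1}(1+\gamma)$ for all $\zz\in\mN_\ww$ once $\gamma$ is fixed and $\epsilon,\delta$ are taken sufficiently small relative to $\gamma$. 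This is essentially contained in Melczer and Wilson~\cite[Proposition 4.2]{MelczerWilson2019} together with the characterization of $\Gamma$ in~\eqref{eq:critical-point-set}, but it must be invoked carefully because $\epsilon=n^{-\alpha}$ and $\delta=n^{-\beta}$ tend to zero — fortunately they tend to zero, so for all large $n$ they are smaller than any fixed threshold and the fixed-$\gamma$ estimates apply. A secondary point requiring care is confirming that the $H_3$ pole and the spurious pole of $P$-type factors genuinely stay outside the annulus; this is handled by Lemma~\ref{lem:otherfactors}, which places the $H_3$ pole at modulus $>S(\one)^{-1}$, combined with choosing $\gamma$ small enough that $S(\one)^{-1}(1+\gamma)$ still lies below it. Once these localization-and-transversality facts are in place, the residue evaluation and the algebraic simplification of the amplitude are routine, and the error terms from Lemmas~\ref{lem:localize} and~\ref{lem:outer} combine into the single $O(S(\one)^n e_n)$ term in the statement.
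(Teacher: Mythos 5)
Your proposal is correct and follows essentially the same route as the paper: localize via Lemma~\ref{lem:localize}, introduce the $C^{\textsl{out}}$ contour and discard it via Lemma~\ref{lem:outer}, use Lemma~\ref{lem:otherfactors} to isolate the $H_2$ pole as the only simple pole crossed, compute the residue at $t=1/(z_1\cdots z_d\oS(\zz))$, and simplify the amplitude using $\oS(\zz)=\oz_d B(\htbz)+Q(\htbz)+z_dA(\htbz)$. Your version is somewhat more explicit than the paper's terse write-up (you track the cancellation of $(z_1\cdots z_d)^{n+1}$ against $t_\ast^{-n-1}$, note explicitly that $H_1=1-z_d$ cannot vanish on $\mN$ because $|z_d|=1-\epsilon<1$, and flag the transversality issue as $\epsilon,\delta\to 0$), but the argument and its justifications are the same.
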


\begin{proof}
Lemmas~\ref{lem:localize} and~\ref{lem:outer} imply that we can write
\[ s_n = \frac{-1}{(2 \pi i)^{d}} \int_{\mN} \left(\frac{1}{2\pi i} \int_{C^{\textsl{out}}} \frac{ G(\zz,t) }{H(\zz,t) } \frac{ d t}{(z_1 \cdots z_d \, t)^{n+1}} - \frac{1}{2\pi i} \int_{C^{\textsl{in}}} \frac{ G(\zz,t) }{H(\zz,t) } \frac{ d t}{(z_1 \cdots z_d \, t)^{n+1}} \right) d\zz + O((S(\one)^ne_n) \]
under our assumptions. The first statement of Lemma~\ref{lem:otherfactors} implies that the inner difference of integrals is given by a residue where $t = 1/\left(z_1 \cdots z_d \oS(\zz)\right)$, and the second statement of Lemma~\ref{lem:otherfactors} implies that the poles involved are all simple. Computing the residue gives
\[ 
-\frac{(1+z_1)\cdots(1+z_{d-1})\left(1 - \frac{Q\left(\htbz\right)+2z_d A\left(\htbz\right)}{\oz_d B\left(\htbz\right) + Q\left(\htbz\right) + z_d A\left(\htbz\right)} \right)}
{(z_1\cdots z_d \oS(\zz))(1-z_d) \left(1 - \frac{Q\left(\htbz\right)+z_d A\left(\htbz\right)}{\oz_d B\left(\htbz\right) + Q\left(\htbz\right) + z_d A\left(\htbz\right)}\right)} = -\frac{(1+z_d)\cdots(1+z_{d-1})\left(1 - z_d^2\frac{A\left(\htbz\right)}{B\left(\htbz\right)}\right)}{(1-z_d)(z_1\cdots z_d \oS(\zz))}.
\]
\end{proof}

\subsection{Computing Dominant Asymptotics}
\label{subseq:asymptotics}
To determine asymptotics of $s_n$ it is sufficient to determine asymptotics for the residue integrals in~\eqref{eq:resInt}. If the factor of $1-z_d$ could be cancelled from the numerator (which occurs in the highly symmetric case, when $A(\htbz)=B(\htbz)$) then asymptotically evaluating these integrals is a straightforward application of the saddle-point method. The presence of this factor complicates the analysis, which is why the non-highly symmetric zero drift case was left open in~\cite{MelczerWilson2019}. To determine asymptotics, we thus switch to polar coordinates and carefully consider the resulting saddle-point-like integrals.

We start by reducing the contribution of the point $\bw=\one$, which turns out to be the only point contributing to dominant asymptotics, to a standard saddle-point integral.

\begin{proposition}
\label{prop:leadingsaddle}
Under our running assumptions on $\epsilon$ and $\delta$,
\[
\frac{1}{(2\pi i)^d}\int_{\mN_\one} \frac{(1+z_1)\cdots(1+z_{d-1})}{B\left(\htbz\right)(z_1\cdots z_d)} \cdot \frac{B\left(\htbz\right) - z_d^2 A\left(\htbz\right)}{1-z_d} \cdot \oS(\zz)^n \, d\zz 
\sim \frac{1}{\pi^d} \int_{\zz \in \mM} e^{n  \log \oS\left(e^{i\btheta}\right)} d\btheta
\]
where $\mM = (-\delta,\delta)^d$ and $e^{i\btheta}=\left(e^{i\theta_1},\dots,e^{i\theta_{d}}\right)$.
\end{proposition}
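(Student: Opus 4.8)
The plan is to pass to polar coordinates on $\mN_\one$ and show that all the pieces of the integrand except the oscillatory factor $\oS(\zz)^n$ can be replaced by their limiting values at $\zz=\one$, up to an error that is negligible relative to the saddle-point integral. Write $z_j = e^{i\theta_j}$ for $1\le j\le d-1$ and $z_d = (1-\epsilon)e^{i\theta_d}$, so that $d\zz/(z_1\cdots z_d) = i^d\, d\btheta$ and the domain becomes $\mM = (-\delta,\delta)^d$ (with the $z_d$ circle at radius $1-\epsilon$). The factor $(2\pi i)^{-d}\, i^d$ contributes $(2\pi)^{-d}$, and since the amplitude will tend to a constant I expect to recover the stated $\pi^{-d}$ after accounting for the numerator value at $\one$; concretely, $(1+z_1)\cdots(1+z_{d-1})$ tends to $2^{d-1}$ and the ratio $(B(\htbz)-z_d^2A(\htbz))/(1-z_d)$ needs separate care as explained below. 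The first step is therefore this change of variables together with the bookkeeping of constants.

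The central point is the singular factor $\dfrac{B(\htbz)-z_d^2A(\htbz)}{1-z_d}$. Using zero drift, $A(\one)=B(\one)$, so the numerator $B(\htbz)-z_d^2A(\htbz)$ vanishes at $\zz=\one$, matching the zero of $1-z_d$; writing $A=B+(A-B)$ gives
\[
\frac{B(\htbz)-z_d^2A(\htbz)}{1-z_d} = B(\htbz)\cdot\frac{1-z_d^2}{1-z_d} - z_d^2\cdot\frac{A(\htbz)-B(\htbz)}{1-z_d} = B(\htbz)(1+z_d) - z_d^2\cdot\frac{A(\htbz)-B(\htbz)}{1-z_d}.
\]
On $\mN_\one$ the first term is bounded and tends to $2B(\one)$; combined with the $B(\htbz)$ in the denominator of the integrand and the $2^{d-1}$ from $(1+z_1)\cdots(1+z_{d-1})$ this produces the constant $2^d$, which against $(2\pi)^{-d}$ yields $\pi^{-d}$. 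The second term has $A(\htbz)-B(\htbz) = O(\theta_1+\cdots+\theta_{d-1})$ vanishing to first order in $\htbz$, while $1-z_d = 1-(1-\epsilon)e^{i\theta_d}$ has modulus of order $\epsilon + |\theta_d|$; thus this term is $O\!\big((|\theta_1|+\cdots+|\theta_{d-1}|)/(\epsilon+|\theta_d|)\big)$. The hard part will be showing this contributes negligibly: because $\epsilon$ can be small compared to $|\theta_d|$, one cannot simply bound the quotient, and the integral of $(|\theta_1|+\cdots+|\theta_{d-1}|)/(\epsilon+|\theta_d|)$ against $e^{-n(c_1\theta_1^2+\cdots+c_d\theta_d^2)}$ must be estimated carefully. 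Over $\theta_d\in(-\delta,\delta)$ the factor $\int (\epsilon+|\theta_d|)^{-1}e^{-nc_d\theta_d^2}\,d\theta_d$ is of order $\log(1/\epsilon) + \log(\sqrt n)$ (up to constants, since the Gaussian has width $n^{-1/2}$ and $\epsilon=n^{-\alpha}$), which is only polylogarithmic in $n$, while each $\int |\theta_j| e^{-nc_j\theta_j^2}\,d\theta_j = O(1/n)$. So the second term contributes $O(n^{-(d+1)/2}\log n)$ against a main term of order $n^{-d/2}$ — negligible. This step is where the running assumptions on $\alpha,\beta$ (in particular $\alpha+\beta>1$ and $\beta<1/2$) get used to control the cross terms and remainders.

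For the oscillatory factor I would expand $\log\oS$ near $\one$: using symmetry of $\oS$ in its first $d-1$ variables and the identity $\oz_d B + Q + z_d A$, one gets
\[
\log\oS\big(e^{i\hat\btheta},(1-\epsilon)e^{i\theta_d}\big) = \log\oS\big(e^{i\btheta}\big) - \log S(\one) + O(n\epsilon\theta_1) + \cdots + O(n\epsilon\theta_{d-1}) + O(n\epsilon^2)
\]
after multiplying by $n$ (the $z_d$-radius perturbation only shifts things by $O(\epsilon)$ since $\partial_{z_d}\log\oS$ vanishes at $\one$ by the zero-drift critical-point condition, leaving a quadratic $O(\epsilon^2)$ plus mixed $O(\epsilon\theta_j)$ terms). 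Under $\epsilon=n^{-\alpha}$ with $\alpha>1/2$, the terms $n\epsilon^2 = n^{1-2\alpha}\to0$ and the mixed terms are dominated by the quadratic form on $\mM$, so $\oS(\zz)^n = S(\one)^n\big(1+o(1)\big)e^{n\log\oS(e^{i\btheta}) - n\log S(\one)}$ uniformly on $\mM$; actually I want to keep the $S(\one)^n$ bundled as in the statement, so I would write $\oS(\zz)^n \sim \exp\!\big(n\log\oS(e^{i\btheta})\big)$ after the radial correction is absorbed. Combining: the localized integral equals $\pi^{-d}\int_\mM e^{n\log\oS(e^{i\btheta})}\,d\btheta$ plus an error of order $n^{-(d+1)/2}\log n = o(n^{-d/2})$, which is absorbed into the $\sim$. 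Finally I would remark that replacing $\mM=(-\delta,\delta)^d$ by $\R^d$ and the Gaussian approximation is deferred to Proposition~\ref{prop:leadingasm}, so here it suffices to stop at the displayed saddle-point integral. The main obstacle, as flagged, is the careful estimate of the $(|\hat\btheta|)/(\epsilon+|\theta_d|)$ term, which is exactly the novelty forcing the $\epsilon\to0$ balancing that distinguishes this case from the highly symmetric one.
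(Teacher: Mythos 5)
Your overall architecture matches the paper's: change of variables to $\btheta$, isolate the singular factor, expand $\log\oS$ in $\epsilon$ and the angles, and show everything except the oscillatory factor converges to a constant. Your algebraic split
\[
\frac{B(\htbz)-z_d^2A(\htbz)}{1-z_d} = B(\htbz)(1+z_d) - z_d^2\cdot\frac{A(\htbz)-B(\htbz)}{1-z_d}
\]
is a valid (slightly different) way of exhibiting the cancellation, and your proposal to estimate the error by integrating it against the Gaussian rather than by a uniform bound is a legitimate alternative to the paper's approach (which bounds $\abs{1-(1-\epsilon)e^{i\theta_d}}^{-1}\leq\epsilon^{-1}$ and invokes $\delta^2\epsilon^{-1}\to0$ from $\alpha<2\beta$).

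However there is a genuine gap, and it is the heart of the matter. You assert $A(\htbz)-B(\htbz)=O(\theta_1+\cdots+\theta_{d-1})$, i.e.\ \emph{first}-order vanishing. With that, your own estimate does not close: one $\theta_j$ integral contributes $\int\abs{\theta_j}e^{-nc_j\theta_j^2}\,d\theta_j=O(n^{-1})$, the $\theta_d$ integral contributes $O(\log n)$, and the remaining $d-2$ Gaussians contribute $O(n^{-(d-2)/2})$, for a total of $O(n^{-d/2}\log n)$ — the same order as the main term (up to the log), not $o(n^{-d/2})$. The figure $O(n^{-(d+1)/2}\log n)$ you wrote is an arithmetic slip of exactly one power of $n^{-1/2}$. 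The missing ingredient is symmetry: because $A$ and $B$ are invariant under $z_j\mapsto\oz_j$ for $1\leq j\leq d-1$, the functions $A(e^{i\hat\btheta})$ and $B(e^{i\hat\btheta})$ are \emph{even} in each $\theta_j$, so all first-order Taylor coefficients vanish and in fact $A(e^{i\hat\btheta})-B(e^{i\hat\btheta})=O(\theta_1^2+\cdots+\theta_{d-1}^2)$. With that second-order vanishing your integral estimate gives $\int\theta_j^2e^{-nc_j\theta_j^2}\,d\theta_j=O(n^{-3/2})$ and the error truly is $O(n^{-(d+1)/2}\log n)=o(n^{-d/2})$; alternatively, the uniform bound $O(\delta^2\epsilon^{-1})\to0$ (valid under the running assumption $\alpha<2\beta$) closes the argument as in the paper. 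Without invoking this evenness the proof fails, so you should make it explicit.
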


\begin{proof}
Making the change of variables $z_j = e^{i\theta_j}$ for $1 \leq j \leq d-1$ and $z_d = (1-\epsilon)e^{i\theta_d}$ gives
\[ \frac{1}{(2\pi i)^d}\int_{\mN_\one} \frac{(1+z_1)\cdots(1+z_{d-1})}{B\left(\htbz\right)(z_1\cdots z_d)} \cdot \frac{B\left(\htbz\right) - z_d^2 A\left(\htbz\right)}{1-z_d} \cdot \oS(\zz)^n \, d\zz 
= \frac{1}{(2\pi)^d}\int_{\zz \in \mM} P(\btheta)Q(\btheta) e^{n \phi(\btheta)} d\btheta,  \]
where
\begin{align*}
P(\btheta) &= \frac{(1+e^{i\theta_1})\cdots(1+e^{i\theta_{d-1}})}{B\left(e^{i\hat{\btheta}}\right)} \\[+2mm]
Q(\btheta) &= \frac{B\left(e^{i\hat{\btheta}}\right) - (1-\epsilon)^2e^{2i\theta_d} A\left(e^{i\hat{\btheta}}\right)}{1-(1-\epsilon)e^{i\theta_d}} \\[+2mm]
\phi(\btheta) &= \log \oS\left(e^{i\hat{\btheta}},(1-\epsilon)e^{i\theta_d}\right)
\end{align*}
and $e^{i\hat{\btheta}}=\left(e^{i\theta_1},\dots,e^{i\theta_{d-1}}\right)$. The first factor is easiest to analyze as $\epsilon,\delta\rightarrow0$, with a power series expansion of the numerator and denominator showing that
\[ P(\btheta) = \frac{2^{d-1}}{B(\one)} + O(\epsilon + \theta_1 + \cdots + \theta_{d-1}). \]
Since $A(\htbz)$ and $B(\htbz)$ are invariant under the transformations $z_j=1/z_j$ for $1 \leq j \leq d-1$, the functions $A\left(e^{i\hat{\btheta}}\right)$ and $B\left(e^{i\hat{\btheta}}\right)$ are even. Combined with the fact that $A(\one)=B(\one)$, we see that
\begin{align*} 
Q(\btheta) 
&= \frac{B(\one)-(1-\epsilon)^2e^{2i\theta_d} A(\one) + O(\theta_1^2 + \cdots + \theta_{d-1}^2)}{1-(1-\epsilon)e^{i\theta_d}} \\[+2mm]
&= B(\one)(1+(1-\epsilon)e^{i\theta_d}) + O\left(\frac{\theta_1^2 + \cdots + \theta_{d-1}^2}{1-(1-\epsilon)e^{i\theta_d}}\right) \\[+2mm]
&= 2B(\one) + O(\epsilon + \theta_d + \theta_1^2\epsilon^{-1} + \cdots + \theta_{d-1}^2\epsilon^{-1}). 
\end{align*}
Finally, we have
\begin{align*} 
\phi(\btheta) 
&= \log \left((1-\epsilon)e^{i\theta_d}A\left(e^{i\hat{\btheta}}\right) + Q\left(e^{i\hat{\btheta}}\right) + \frac{B\left(e^{i\hat{\btheta}}\right)}{(1-\epsilon)e^{i\theta_d}} \right) \\[+2mm]
&=\log \left( e^{i\theta_d}A\left(e^{i\hat{\btheta}}\right) + Q\left(e^{i\hat{\btheta}}\right) + e^{-i\theta_d}B\left(e^{i\hat{\btheta}}\right)\right) 
+ \epsilon \frac{B\left(e^{i\hat{\btheta}}\right) - e^{i\theta_d}A\left(e^{i\hat{\btheta}}\right)}{e^{i\theta_d}A\left(e^{i\hat{\btheta}}\right) + Q\left(e^{i\hat{\btheta}}\right) + e^{-i\theta_d}B\left(e^{i\hat{\btheta}}\right)} 
+ O(\epsilon^2) \\[+2mm]
&=\log \left( e^{i\theta_d}A\left(e^{i\hat{\btheta}}\right) + Q\left(e^{i\hat{\btheta}}\right) + e^{-i\theta_d}B\left(e^{i\hat{\btheta}}\right)\right) 
+ \epsilon \left(\frac{B\left(e^{i\hat{\btheta}}\right) - A\left(e^{i\hat{\btheta}}\right)}{A\left(e^{i\hat{\btheta}}\right)+Q\left(e^{i\hat{\btheta}}\right)+B\left(e^{i\hat{\btheta}}\right)}+O(\theta_d)\right) 
+ O(\epsilon^2) \\[+2mm]
&= \log \oS\left(e^{i\btheta}\right) + O(\epsilon\theta_1 + \cdots + \epsilon\theta_d + \epsilon^2).
\end{align*}
Putting everything together gives
\[ \frac{1}{(2\pi)^d}\int_{\zz \in \mM} P(\btheta)Q(\btheta) e^{n \phi(\btheta)} d\btheta \sim \frac{1}{\pi^d} \int_{\zz \in \mM} e^{n  \log \oS\left(e^{i\btheta}\right) } d\btheta,
\]
since each $|\theta_j| <\delta$ on $\mM$ and all of $\epsilon,\delta,\delta^2\epsilon^{-1},n\epsilon\delta,n\epsilon^2 \rightarrow 0$ go to zero as $n\rightarrow\infty$ under our assumptions on $\epsilon$ and $\delta$.
\end{proof}

Having removed the dependence on $\epsilon$ for the integral determining dominant asymptotics, we now determine asymptotic behaviour using a saddle-point analysis.

\begin{proposition}
\label{prop:leadingasm}
Under our running assumptions on $\epsilon$ and $\delta$,
\[
\frac{1}{\pi^d} \int_{\zz \in \mM} e^{n  \log \oS\left(e^{i\btheta}\right)} d\btheta
\sim 
S(\bone)^n n^{-d/2} \frac{S(\bone)^{d/2}}{ \pi^{d/2} (b_1 \cdots b_d)^{1/2}},
\]
where we recall from above that $b_k = \sum_{\bi \in \mS, i_k=1} w_{\bi}$ is the total weight of the steps in $\mS$ moving forward in the $k$th coordinate.
\end{proposition}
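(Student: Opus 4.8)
The plan is a standard saddle-point evaluation; the only genuinely delicate point is to verify that the running constraints on $\delta$ make the truncated integral over $\mM$ asymptotic to a full Gaussian integral over $\R^d$.

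First I would Taylor-expand the phase $\phi(\btheta) = \log\oS(e^{i\btheta})$ about the saddle $\btheta = \bzer$. Under the parametrization, $\oS(e^{i\btheta}) = \sum_{\bi\in\mS} w_\bi\, e^{i(i_1\theta_1 + \cdots + i_{d-1}\theta_{d-1} - i_d\theta_d)}$, which equals $S(\one)$ at $\btheta = \bzer$. The structural fact that makes everything work is that the Hessian of $\phi$ at $\bzer$ is diagonal and negative definite. Since $A,B,Q$ are symmetric in their variables, $\oS(e^{i\btheta})$ is an even function of each of $\theta_1,\dots,\theta_{d-1}$ separately; this forces all mixed second partials $\partial_{\theta_j}\partial_{\theta_k}\oS(e^{i\btheta})|_{\bzer}$ ($j\neq k$) to vanish, as well as the first partials $\partial_{\theta_j}\oS(e^{i\btheta})|_{\bzer}$ for $j\le d-1$, while the one remaining first partial $\partial_{\theta_d}\oS(e^{i\btheta})|_{\bzer} = i\bigl(A(\one) - B(\one)\bigr)$ vanishes by the zero-drift hypothesis. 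Differentiating the monomials twice in $\theta_j$ then gives the diagonal Hessian entries of $\oS(e^{i\btheta})$ as $\partial_{\theta_j}^2\oS(e^{i\btheta})|_{\bzer} = -\sum_{\bi\in\mS} w_\bi\, i_j^2 = -2b_j$, where the last equality uses $i_j^2 = \mathbf{1}[i_j\neq 0]$ together with symmetry over the $j$th axis when $j\le d-1$, and the identity $A(\one) = B(\one) = b_d$ when $j = d$. Since $\oS(\bzer) = S(\one)\neq 0$ and all first partials of $\oS(e^{i\btheta})$ vanish at $\bzer$, I conclude
\[ \log\oS(e^{i\btheta}) = \log S(\one) - \frac{1}{S(\one)}\sum_{j=1}^{d} b_j\,\theta_j^2 + O\!\left(|\btheta|^3\right) \]
uniformly on a fixed neighbourhood of $\bzer$, with every $b_j > 0$ because $\mS$ steps forward in each coordinate.

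Next I would substitute this expansion and rescale $\theta_j = u_j\,n^{-1/2}$. Writing $B_n = (-n^{1/2-\beta}, n^{1/2-\beta})^d$ for the rescaled box, the integral becomes
\[ \frac{1}{\pi^d}\int_{\mM} e^{n\,\log\oS(e^{i\btheta})}\,d\btheta = \frac{S(\one)^n}{\pi^d\, n^{d/2}}\int_{B_n} \exp{-\frac{1}{S(\one)}\sum_{j=1}^{d} b_j u_j^2 + O\!\left(|\uu|^3 n^{-1/2}\right)}\, d\uu. \]
On $B_n$ one has $|\uu| = O(n^{1/2-\beta})$, so the correction exponent is $O(n^{1-3\beta})\to 0$ since $\beta>1/3$; in fact, for all large $n$ the cubic remainder is bounded in modulus by half the quadratic form throughout $B_n$, so the integrand is dominated by the fixed integrable function $\exp{-\tfrac{1}{2S(\one)}\sum_j b_j u_j^2}$ and converges pointwise to $\exp{-\tfrac{1}{S(\one)}\sum_j b_j u_j^2}$. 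Because $\beta<1/2$ the boxes $B_n$ exhaust $\R^d$, so dominated convergence gives
\[ \int_{B_n} \exp{-\frac{1}{S(\one)}\sum_j b_j u_j^2 + O(|\uu|^3 n^{-1/2})}\, d\uu \;\longrightarrow\; \int_{\R^d} \exp{-\frac{1}{S(\one)}\sum_j b_j u_j^2}\, d\uu = \prod_{j=1}^{d}\sqrt{\frac{\pi\, S(\one)}{b_j}} = \frac{\pi^{d/2}\, S(\one)^{d/2}}{(b_1\cdots b_d)^{1/2}}. \]
Multiplying by the prefactor $S(\one)^n\pi^{-d}n^{-d/2}$ produces precisely the claimed asymptotic.

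The one step that requires real care — the \emph{main obstacle}, such as it is — is this last limit: one must keep track of the implied constant in the $O(|\btheta|^3)$ remainder on a fixed neighbourhood of $\bzer$, check that after the $n^{-1/2}$ rescaling it becomes $o(1)$ \emph{uniformly} on the growing box $B_n$ (this is exactly where $\beta>1/3$ enters), and exhibit a single dominating function valid for all large $n$ simultaneously (here one uses that on $B_n$ the cubic correction stays below half the quadratic form, so $\exp{O(|\uu|^3 n^{-1/2})}$, and hence its modulus, is uniformly bounded). The inequality $\beta<1/2$ is what lets the truncated domain fill out $\R^d$ in the limit; the remaining running assumptions ($\alpha>1/2$, $\alpha<2\beta$, $\alpha+\beta>1$) play no role in this particular computation, having already been spent in the earlier steps of the argument, notably Proposition~\ref{prop:leadingsaddle}.
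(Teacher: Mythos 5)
Your proof is correct and follows essentially the same route as the paper's: establish the quadratic expansion $\log\oS(e^{i\btheta}) = \log S(\one) - \tfrac{1}{S(\one)}\sum_j b_j\theta_j^2 + O(|\btheta|^3)$ (the paper computes partials of $\oS$ in the $\zz$-variables and applies the chain rule, whereas you differentiate the composed map $\oS(e^{i\btheta})$ directly and invoke evenness in $\theta_1,\dots,\theta_{d-1}$, but the content is identical), then evaluate the resulting Laplace-type integral. The only cosmetic difference is in the final step: the paper factors out a uniform $(1+O(n\delta^3))$ multiplicative error over $\mM$ and extends each one-dimensional Gaussian integral to $\R$, while you rescale $\btheta = \uu\, n^{-1/2}$ and apply dominated convergence over the expanding box $B_n = (-n^{1/2-\beta}, n^{1/2-\beta})^d$; both rely on exactly the two constraints $\beta > 1/3$ (to kill the rescaled cubic remainder) and $\beta < 1/2$ (so the truncated domain fills out $\R^d$), and you are right that $\alpha$ plays no role here.
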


\begin{proof}
We claim that there is an expansion
\begin{equation}
\log \oS\left(e^{i\btheta}\right) = \log S(\bone) - \sum_{j=1}^{d}\frac{ b_j }{S(\bone)}\theta_j^2 + O ( (\theta_1 + \cdots + \theta_d)^3 ).
\label{eq:logphiexpand}
\end{equation}
Assuming this claim, for $n$ sufficiently large the domain $\mM$ is a small enough neighbourhood of the origin that we can write
\begin{align*}
    \int_{\mM} e^{n \log \oS\left(e^{i\btheta}\right) } d\btheta &= S(\bone)^n \int_{\zz \in \mM} e^{ - n \sum_{j=1}^{d}\frac{ b_j }{S(\bone)}\theta_j^2 + O ( n(\theta_1 + \cdots + \theta_d)^3 ) } d\btheta\\
    &= S(\bone)^n \left(\int_{\zz \in \mM} e^{ - n \sum_{j=1}^{d}\frac{ b_j }{S(\bone)}\theta_j^2 } d\btheta \right) \big(1 + O(n\delta^3) \big)\\
    &\sim S(\bone)^n \int_{\zz \in \mM} e^{ - n \sum_{j=1}^{d}\frac{ b_j }{S(\bone)}\theta_j^2} d \btheta\\
    &= S(\bone)^n \prod_{j = 1}^d 
    \int_{-\delta}^\delta e^{-n \frac{b_j}{S(\bone)} \theta_j^2 } d \theta_j\\
    &\sim S(\bone)^n \prod_{j = 1}^d 
    \int_{-\infty}^\infty e^{-n \frac{b_j}{S(\bone)} \theta_j^2 } d \theta_j\\
    &= S(\bone)^n n^{-d/2} \frac{\pi^{d/2} S(\bone)^{d/2}}{ (b_1 \cdots b_d)^{1/2}}.
\end{align*}

To prove the expansion~\eqref{eq:logphiexpand} we simply need to compute derivatives and evaluate at the origin. First, direct substitution gives $\log \oS(e^{i\bzer})= \log\oS(\one)= \log S(\one)$. The fact that all first order derivatives of $\log \oS$ vanish is a reflection of the fact that $(\one,1/S(\one))$ is a critical point. Explicitly, symmetry over the first $(d-1)$-coordinates implies that for each $1 \leq k \leq d-1$ we can write $\oS(\zz) = (z_k + \oz_k)B_k(\bzht{k})+Q(\bzht{k})$ so that 
\[ \oS_{z_k}(\zz) = (1 - z_k^{-2})B_k(\bzht{k}) \]
vanishes at $\zz=\one$, and 
\[ \oS_{z_d}(\zz) = A(\htbz) - z_d^{-2}B(\htbz) \]
also vanishes at $\zz=\one$ as $A(\one)=B(\one)$. The chain rule then implies that all first order partial derivatives of $(\log \oS)(e^{i\btheta})$ vanish at the origin. These expressions also imply that all mixed second order partial derivatives $\oS_{z_iz_j}(\one)$ for $i\neq j$ vanish, so all the mixed second order partial derivatives of $(\log \oS)(e^{i\btheta})$ vanish at the origin. Finally, we see that
\[ \oS_{z_kz_k}(\one) = 2 B_k(\one) = 2b_k \]
for each $1 \leq k \leq d-1$, and 
\[ \oS_{z_dz_d}(\one) = 2 B(\one) = 2b_d, \]
so that the chain rule gives
\[ \left(\frac{\partial^2}{\partial \theta_k^2} \log S(e^{i\btheta})\right)(\bzer) = \frac{\oS_{z_k}(\one)^2}{S(\one)^2} -\frac{\oS_{z_k}(\one)}{S(\one)} -\frac{\oS_{z_kz_k}(\one)}{S(\one)} = - 2 \frac{b_j}{S(\one)} \]
for all $1 \leq k \leq d$. The claimed expansion~\eqref{eq:logphiexpand} then holds as the quadratic term in the expansion is half the sum of the second order derivatives.
\end{proof}

\begin{rem}
Although Proposition~\ref{prop:leadingasm} only concerns the point $\one \in \Gamma$, there is an expansion
\[ \log \oS(w_1e^{i\theta_1},\dots,w_de^{i\theta_d}) = \log S(\ww) - \sum_{j=1}^{d}\frac{ b_j }{S(\bone)}\theta_j^2 + O ( (\theta_1 + \cdots + \theta_d)^3 ) \] 
for all $\ww \in \Gamma$. Indeed, for $\ww$ to lie in $\Gamma$ all terms $\ww^{\bi}$ for $\bi\in\mS$ must have the same argument, which can be used to show the quadratic term is constant for these expansions (this matters for computing higher-order terms in the asymptotic expansion of $s_n$). 
\end{rem}

To prove Theorem~\ref{thm:main} it is now sufficient to show that the only point of $\Gamma$ contributing to dominant asymptotics of~$s_n$ is $\one$.

\begin{proposition}
Under our running assumptions on $\epsilon$ and $\delta$, if $\ww\in\Gamma$ and $\ww \neq \one$ then
\[ \int_{\mN_\ww} \frac{(1+z_1)\cdots(1+z_{d-1})}{B\left(\htbz\right)(z_1\cdots z_d)} \cdot \frac{B\left(\htbz\right) - z_d^2 A\left(\htbz\right)}{1-z_d} \cdot \oS(\zz)^n \, d\zz = o(S(\bone)^n n^{-d/2}). \]
\end{proposition}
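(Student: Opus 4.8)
The plan is to bound the integral over each $\mN_\ww$ by isolating enough extra vanishing of the amplitude at $\ww$ to overcome the singular factor $1/(1-z_d)$. Write the integrand of~\eqref{eq:resInt} as $\Psi(\zz)\,\Xi(\zz)\,\oS(\zz)^n$, where
\[ \Psi(\zz)=\frac{(1+z_1)\cdots(1+z_{d-1})}{B(\htbz)\,(z_1\cdots z_d)}, \qquad \Xi(\zz)=\frac{B(\htbz)-z_d^2 A(\htbz)}{1-z_d}. \]
The first step is to record that $\int_{\mN_\ww}|\oS(\zz)|^n\,|d\zz|=O(S(\one)^n n^{-d/2})$. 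Using the expansion of $\log\oS$ around $\ww$ from the remark following Proposition~\ref{prop:leadingasm}, together with the bookkeeping in the proof of Lemma~\ref{lem:localize} (where the running assumptions $\beta>1/3$, $\alpha+\beta>1$ and $\alpha>1/2$ are used to absorb the cubic and $\epsilon$-dependent corrections in the exponent into the quadratic term), one gets $|\oS(\zz)|^n\le S(\one)^n e^{-cn(\theta_1^2+\cdots+\theta_d^2)}$ on $\mN_\ww$ for some constant $c>0$, and integrating this Gaussian over $(-\delta,\delta)^d$ yields the bound. Granting this, it suffices to show that $|\Psi(\zz)\Xi(\zz)|=O(\delta)$ uniformly on $\mN_\ww$ with a constant independent of $n$; the integral is then $O(\delta\,S(\one)^n n^{-d/2})=o(S(\one)^n n^{-d/2})$ since $\delta=n^{-\beta}\to0$.

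For the amplitude bound I would split into cases according to $w_d$, noting throughout that $1/B(\htbz)$ is bounded on $\mN_\ww$ by Lemma~\ref{lem:otherfactors} and $|z_1\cdots z_d|=1-\epsilon$, so those factors are harmless. If $w_d\neq1$ then $1-z_d$ is bounded away from $0$ on $\mN_\ww$, so $\Xi=O(1)$; and if moreover $\htbw\neq\one$ then some $w_j=-1$ with $1\le j\le d-1$, so $|1+z_j|=|1-e^{i\theta_j}|=O(\delta)$ and hence $\Psi=O(\delta)$. The remaining possibility with $w_d\neq1$ is $\htbw=\one$: membership of $(\one,w_d)$ in $\Gamma$ forces $|S(\one,w_d)|=S(\one)$, and since $S(\one,w_d)=w_d^{-1}A(\one)+Q(\one)+w_dB(\one)$ with $A(\one)=B(\one)>0$, this excludes $w_d=\pm i$ and forces $Q(\one)=0$ when $w_d=-1$; in the surviving subcase $\ww=(\one,-1)$ the numerator of $\Xi$ vanishes at $\ww$ because $B(\one)-A(\one)=0$, and writing $z_d=-(1-\epsilon)e^{i\theta_d}$ and using that $A(\htbw e^{i\hat{\btheta}})$ and $B(\htbw e^{i\hat{\btheta}})$ are even in $\hat{\btheta}$ (as $A,B$ are symmetric in their variables and $\htbw\in\{\pm1\}^{d-1}$) one obtains $B(\htbz)-z_d^2A(\htbz)=A(\one)(1-z_d^2)+O(|\hat{\btheta}|^2)=O(\epsilon+|\theta_d|+|\hat{\btheta}|^2)=O(\delta)$, so $\Xi=O(\delta)$. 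Thus $|\Psi\Xi|=O(\delta)$ whenever $w_d\neq1$.

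The delicate case is $w_d=1$; then $\ww\neq\one$ forces $\htbw\neq\one$, so $\Psi=O(\delta)$ as above, but now $1-z_d$ vanishes and only $|1-z_d|\ge\epsilon$ is available. The key arithmetic input is that $(\htbw,1)\in\Gamma$ forces $A(\htbw)=B(\htbw)$: indeed $S(\htbw,1)=A(\htbw)+Q(\htbw)+B(\htbw)$ is real of modulus $S(\one)=A(\one)+Q(\one)+B(\one)$, and since the monomials of $A,Q,B$ take values $\pm1$ at $\htbw\in\{\pm1\}^{d-1}$ we have $|A(\htbw)|\le A(\one)$, $|Q(\htbw)|\le Q(\one)$ and $|B(\htbw)|\le B(\one)$, so equality in the triangle inequality forces $A(\htbw)=\sigma A(\one)$ and $B(\htbw)=\sigma B(\one)$ for a common sign $\sigma\in\{\pm1\}$, whence $B(\htbw)-A(\htbw)=\sigma(B(\one)-A(\one))=0$ by zero drift. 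Setting $u=z_d-1$ (so $|u|\ge\epsilon$ on $\mN_\ww$) and again using evenness of $A,B$ in the angular variables, the numerator of $\Xi$ satisfies $B(\htbz)-z_d^2A(\htbz)=-2uA(\htbw)+O(u^2)+O(|\hat{\btheta}|^2)=O(|u|+\delta^2)$, so that $|\Xi|\le O(1+\delta^2/\epsilon)=O(1)$ by the running assumption $\alpha<2\beta$. Hence $|\Psi\Xi|=O(\delta)$ here as well, and combining with the first step gives the result.

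I expect the main obstacle to be precisely this last case. One must simultaneously extract the cancellation $A(\htbw)=B(\htbw)$ from the maximum-modulus structure of $S$ on $\Gamma$, and observe that the numerator of $\Xi$ vanishes only to first order in $u=z_d-1$ but to \emph{second} order in the angular variables $\hat{\btheta}$ — this is exactly where the symmetry of $A$ and $B$ over the first $d-1$ axes is used — so that the $1/|u|\le 1/\epsilon$ blow-up is absorbed because the running assumptions impose $\alpha<2\beta$. A secondary point requiring care is that the Gaussian estimate of the first step must hold uniformly in $\ww\in\Gamma$, which is legitimate because the remark after Proposition~\ref{prop:leadingasm} shows that the quadratic part of the expansion of $\log\oS$ is the same at every point of $\Gamma$.
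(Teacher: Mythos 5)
Your proposal is correct and follows essentially the same strategy as the paper: parameterize $\mN_\ww$ by angular variables, observe that the amplitude gains extra vanishing at $\ww$ (beyond what is available at $\one$), and integrate a Gaussian envelope to get a bound that is $o(S(\one)^n n^{-d/2})$. One thing you make fully explicit that the paper's proof leaves implicit is the identity $A(\htbw)=B(\htbw)$ for $(\htbw,1)\in\Gamma$, obtained from equality in the triangle inequality forced by $|S(\htbw,1)|=S(\one)$; this is needed to cancel the $1/(1-z_d)$ singularity in the $w_d=1$ case, where the paper simply refers the reader to ``the argument used to prove Proposition~\ref{prop:leadingsaddle},'' so your treatment is a welcome clarification.
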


\begin{proof}
For a general point $\ww\in\Gamma$ the integral under consideration can be parameterized by $z_j=w_je^{i\theta_j}$ for $1 \leq j \leq d-1$ and $z_d=(1-\epsilon)w_de^{i\theta_d}$ to yield
\[ i^d\int_{\mM} P(\btheta)Q(\btheta) e^{n \phi(\btheta)} d\btheta,  \]
 where
\begin{align*}
P(\btheta) &= \frac{(1+w_1e^{i\theta_1})\cdots(1+w_{d-1}e^{i\theta_{d-1}})}{B\left(\widehat{\bw}e^{i\hat{\btheta}}\right)} \\[+2mm]
Q(\btheta) &= \frac{B\left(\widehat{\bw}e^{i\hat{\btheta}}\right) - (1-\epsilon)^2w_d^2e^{2i\theta_d} A\left(\widehat{\bw}e^{i\hat{\btheta}}\right)}{1-(1-\epsilon)w_de^{i\theta_d}}
\end{align*}
for $\widehat{\bw}e^{i\hat{\btheta}} = (w_1e^{i\theta_1},\dots,w_{d-1}e^{i\theta_{d-1}})$. Roughly speaking, the theory of ACSV (and saddle-point integrals more generally) shows that higher-order vanishing of $P$ and $Q$ corresponds to slower growing asymptotic behaviour. 

First, if $w_d=1$ and $w_j \neq 1$ for some $1 \leq j \leq d-1$ then $P$ vanishes at the origin, and following the argument used to prove Proposition~\ref{prop:leadingsaddle} shows that the contribution of $\ww$ is 
\[ \int_{\mM} P(\btheta)Q(\btheta) e^{n \phi(\btheta)} d\btheta = o\left(\int_{\zz \in \mM} e^{n  \log \oS\left(e^{i\btheta}\right)} d\btheta\right) = o( S(\bone)^n n^{-d/2}).\] 
Similarly, if $w_d=-1$ then $Q(\ww)=O(\epsilon)$ as the denominator of $Q$ no longer approaches zero as $\epsilon\rightarrow0$, but its numerator still does, and we can again show the contribution of $\ww$ lies in $o( S(\bone)^n n^{-d/2})$. In fact, because only one factor of $H$ vanishes at $\ww$ this asymptotic contribution can be computed using the theory of smooth ACSV to see that it lies in $O( S(\bone)^n n^{-(d+1)/2})$. 

Finally, suppose $w_d = \pm i$. The definition of $\Gamma$ implies that $w_dA(\ww)$ and $\ow_dB(\ww)$ have the same argument for any $\ww\in\Gamma$, so when $w_d=\pm i$ it must be the case that $A(\ww)=-B(\ww)$. This can only hold if $w_j = -1$ for at least one $1 \leq j \leq d-1$, so again the numerator $G$ and only one factor of $H$ vanishes at $\ww$, and the asymptotic contribution of $\ww$ is $O( S(\bone)^n n^{-(d+1)/2}) = o( S(\bone)^n n^{-d/2})$.
\end{proof}

\section{Conclusion}
\label{sec:conclusion}

Theorem~\ref{thm:main} completes the determination of dominant asymptotics for mostly symmetric short step models in an orthant. However, unlike the results for highly symmetric and non-zero drift mostly symmetric models, our arguments for the zero drift mostly symmetric case do not allow one to directly compute higher-order terms in an asymptotic expansion for the number of walks. Carefully tracing through the errors terms that appear in the analysis shows that we can satisfy our running assumptions on $\epsilon$ and $\delta$ and take them arbitrarily close to $n^{-1/2}$, resulting in an error term of the form $O(S(\one)^nn^{-\gamma})$ for any $\gamma<1/2$. 

Interestingly, there are models whose counting sequences have asymptotic series in $n^{-1/2}$ instead of series in $n^{-1}$ as usually occurs from ACSV arguments. To compute higher-order contributions, it is necessary to modify the approach of Proposition~\ref{prop:leadingsaddle} to account for higher-order terms involving $\epsilon$. Practically, one shifts the $d$th coordinate off the real axis and approximates a saddle-point-like integral whose amplitude is singular at the saddle points. A formalization of this approach for general ACSV arguments in \emph{non-generic directions} is postponed to forthcoming work (see also~\cite{BaryshnikovMelczerPemantle2023} for a special case considering rational functions whose denominators have only linear factors), but we end our discussion here with a few examples.

\begin{example}
\label{ex:zerodrift2}
As described in Example~\ref{ex:zerodrift}, asymptotic behaviour for the number of walks of length $n$ in the quadrant model defined by the step set $\mS = \{(0,1),(0,1),(-1,-1),(1,-1)\}$ (whose North step appears twice) is determined, up to a negligible error, by the integral
\[ \frac{1}{(2 \pi i)^2} \int_{\mN(1,1-\epsilon_1)} \frac{1+x}{2x^2y} \cdot \frac{2x - y^2(1+x^2)}{1-y} \oS(x,y)^n dx dy . \]
Restricting the domain of integration to a neighbourhood of the origin in a precise way allows us to take a series expansion and, up to a negligible error, approximate this integral by
\[ \frac{1}{(2 \pi)^2} \int_{\mM + i(0,\epsilon)} \left(4 + \frac{is^2}{t} + \frac{O((s+t)^3)}{t}\right) e^{-n\big(s^2/4 + t^2/2 + O((s+t)^3) \big)} d s d t , \]
where $\mM$ is a neighbourhood of the origin in $\R^2$. The integral of the leading term here is the same as above, however some additional work allows us to compute the second-order term and get
\begin{align*}
    &\frac{1}{(2 \pi)^2} \int_{\mM^2 + i(0,\epsilon)} \left(4 + \frac{is^2}{t} + \frac{O((s+t)^3)}{t}\right) e^{-n\big(s^2/4 + t^2/2 + O((s+t)^3) \big)} d s d t \\[+2mm]
    &= \frac{1}{(2 \pi)^2} \int_{\R^2 + i(0,\epsilon)} \left(4 + \frac{is^2}{t}\right) e^{-n\big(s^2/4 + t^2/2 \big)} d s d t + O\left(\frac{4^n}{n^2}\right) \\[+2mm]
    &= \frac{4^n}{n} \left(\frac{2\sqrt{2}}{\pi} + \frac{1}{\sqrt{\pi}}n^{-1/2} + O(n^{-1})\right). 
\end{align*}
Moving to three-dimensions, similar arguments show that the number of orthant walks of length $n$ in the mostly symmetric zero drift model with step set 
\[\mS_1 = \{ (1,0,-1), (-1,0,-1), (0,1,-1), (0,-1,-1) , (1,1,1), (-1,1,1), (1,-1,1), (-1,-1,1)\}\]
is given (up to negligible error) by
\begin{align*}
    &\frac{8^n}{(2 \pi)^3} \int_{\R^3 + i(0,0,\epsilon)} \left( 8 - \frac{i s^2}{u} - \frac{i t^2}{u} \right) e^{-n (3/8s^2 + 3/8t^2 + u^2/2)} ds dt du + O \left( \frac{8^n}{n^{5/2}} \right) \\[+2mm]
    &= \frac{8^n}{n^{3/2}} \left( \frac{8 \sqrt{2}}{3 \pi^{3/2}} - \frac{8}{9 \pi \sqrt{n}} + O(n^{-1}) \right).
\end{align*}
Finally, the number of orthant walks of length $n$ in the mostly symmetric zero drift model with step set  
\[ \mS = \{ ( \pm 1,0,1) , (0 , \pm 1, -1)  \} \]
is approximated by
\[ \frac{4^n}{(2 \pi )^3} \int_{\R^3 + i(0,0 , \epsilon)} \left( 8 + \frac{2is^2}{u} - \frac{2it^2}{u} \right)e^{-n (s^2/4 + t^2/4 + u^2/2)} ds dt du + O\left( \frac{4^n}{n^{5/2}} \right).\]
In this case, the fact that the terms in $\frac{2is^2}{u} - \frac{2it^2}{u}$ are symmetric and appear with opposite signs means that the second-order term in the asymptotic expansion drops out, giving 
\[ \frac{4^n}{n^{3/2}} \left( \frac{4 \sqrt{2}}{\pi^{3/2}} + O(n^{-1}) \right) \]
as the dominant asymptotic behaviour.
\end{example}

\section{Acknowledgements}

SM was partially funded by NSERC Discovery Grant RGPIN-2021-02382.

\bibliographystyle{alpha}
\bibliography{references}

\newcommand{\etalchar}[1]{$^{#1}$}
\begin{thebibliography}{BCvH{\etalchar{+}}17}

\bibitem[BBMR21]{BernardiBousquet-MelouRaschel2021}
Olivier Bernardi, Mireille Bousquet-M\'{e}lou, and Kilian Raschel.
\newblock Counting quadrant walks via {T}utte's invariant method.
\newblock {\em Comb. Theory}, 1:Paper No. 3, 77, 2021.

\bibitem[BCvH{\etalchar{+}}17]{BostanChyzakHoeijKauersPech2017}
A.~Bostan, F.~Chyzak, M.~van Hoeij, M.~Kauers, and L.~Pech.
\newblock Hypergeometric expressions for generating functions of walks with
  small steps in the quarter plane.
\newblock {\em European J. Combin.}, 61:242--275, 2017.

\bibitem[BK09]{BostanKauers2009}
Alin Bostan and Manuel Kauers.
\newblock Automatic classification of restricted lattice walks.
\newblock In {\em 21st {I}nternational {C}onference on {F}ormal {P}ower
  {S}eries and {A}lgebraic {C}ombinatorics ({FPSAC} 2009)}, Discrete Math.
  Theor. Comput. Sci. Proc., AK, pages 201--215. Assoc. Discrete Math. Theor.
  Comput. Sci., Nancy, 2009.

\bibitem[BM16]{Bousquet-Melou2016}
M.~Bousquet-M\'elou.
\newblock An elementary solution of {G}essel's walks in the quadrant.
\newblock {\em Adv. Math.}, 303:1171--1189, 2016.

\bibitem[BM23]{Bousquet-Melou2023}
Mireille Bousquet-M\'{e}lou.
\newblock Enumeration of three-quadrant walks via invariants: some diagonally
  symmetric models.
\newblock {\em Canad. J. Math.}, 75(5):1566--1632, 2023.

\bibitem[BMP23]{BaryshnikovMelczerPemantle2023}
Yuliy Baryshnikov, Stephen Melczer, and Robin Pemantle.
\newblock Asymptotics of multivariate sequences iv: Generating functions with
  poles on a hyperplane arrangement.
\newblock {\em Annals of Combinatorics}, 2023.

\bibitem[BRS14]{BostanRaschelSalvy2014}
Alin Bostan, Kilian Raschel, and Bruno Salvy.
\newblock Non-{D}-finite excursions in the quarter plane.
\newblock {\em J. Combin. Theory Ser. A}, 121:45--63, 2014.

\bibitem[DH21]{DreyfusHardouin2021}
Thomas Dreyfus and Charlotte Hardouin.
\newblock Length derivative of the generating function of walks confined in the
  quarter plane.
\newblock {\em Confluentes Math.}, 13(2):39--92, 2021.

\bibitem[DHRS21]{DreyfusHardouinRoquesSinger2021}
Thomas Dreyfus, Charlotte Hardouin, Julien Roques, and Michael~F. Singer.
\newblock On the kernel curves associated with walks in the quarter plane.
\newblock In {\em Transcendence in algebra, combinatorics, geometry and number
  theory}, volume 373 of {\em Springer Proc. Math. Stat.}, pages 61--89.
  Springer, Cham, 2021.

\bibitem[DLM16]{DArcoLacivitaMustapha2016}
Philippe D'Arco, Valentina Lacivita, and Sami Mustapha.
\newblock Combinatorics meets potential theory.
\newblock {\em Electron. J. Combin.}, 23(2):Paper 2.28, 17, 2016.

\bibitem[DT21]{DreyfusTrotignon2021}
Thomas Dreyfus and Am\'{e}lie Trotignon.
\newblock On the nature of four models of symmetric walks avoiding a quadrant.
\newblock {\em Ann. Comb.}, 25(3):617--644, 2021.

\bibitem[DW15]{DenisovWachtel2015}
Denis Denisov and Vitali Wachtel.
\newblock Random walks in cones.
\newblock {\em Ann. Probab.}, 43(3):992--1044, 2015.

\bibitem[FIM17]{FayolleIasnogorodskiMalyshev2017}
Guy Fayolle, Roudolf Iasnogorodski, and Vadim Malyshev.
\newblock {\em Random walks in the quarter plane}, volume~40 of {\em
  Probability Theory and Stochastic Modelling}.
\newblock Springer, Cham, second edition, 2017.

\bibitem[Hum10]{Humphreys2010}
Katherine Humphreys.
\newblock A history and a survey of lattice path enumeration.
\newblock {\em J. Statist. Plann. Inference}, 140(8):2237--2254, 2010.

\bibitem[KKZ09]{KauersKoutschanZeilberger2009}
Manuel Kauers, Christoph Koutschan, and Doron Zeilberger.
\newblock Proof of {I}ra {G}essel's lattice path conjecture.
\newblock {\em Proc. Natl. Acad. Sci. USA}, 106(28):11502--11505, 2009.

\bibitem[KM03]{KrattenthalerMohanty2003}
C.~Krattenthaler and S.~G. Mohanty.
\newblock Lattice path combinatorics - applications to probability and
  statistics.
\newblock In {\em Encyclopedia of Statistical Sciences, Second Edition}. Wiley,
  New York, 2003.

\bibitem[Mel21]{Melczer2021}
Stephen Melczer.
\newblock {\em An Invitation to Analytic Combinatorics: From One to Several
  Variables}.
\newblock Texts and Monographs in Symbolic Computation. Springer International
  Publishing, 2021.

\bibitem[MM16]{MelczerMishna2016}
S.~Melczer and M.~Mishna.
\newblock Asymptotic lattice path enumeration using diagonals.
\newblock {\em Algorithmica}, 75(4):782--811, 2016.

\bibitem[Moh79]{Mohanty1979}
S.~G. Mohanty.
\newblock {\em Lattice path counting and applications}.
\newblock Academic Press [Harcourt Brace Jovanovich, Publishers], New
  York-London-Toronto, Ont., 1979.

\bibitem[MW19]{MelczerWilson2019}
Stephen Melczer and Mark~C. Wilson.
\newblock Higher dimensional lattice walks: Connecting combinatorial and
  analytic behavior.
\newblock {\em SIAM J. Discrete Math.}, 33(4):2140--2174, 2019.

\bibitem[Nar79]{Narayana1979}
T.~V. Narayana.
\newblock {\em Lattice path combinatorics with statistical applications},
  volume~23 of {\em Mathematical Expositions}.
\newblock University of Toronto Press, Toronto, Ont., 1979.

\bibitem[Pri22]{Price2022}
Andrew~Elvey Price.
\newblock Enumeration of walks with small steps avoiding a quadrant.
\newblock {\em S\'{e}m. Lothar. Combin.}, 86B:Art. 1, 12, 2022.

\bibitem[PWM24]{PemantleWilsonMelczer2024}
Robin Pemantle, Mark~C. Wilson, and S.~Melczer.
\newblock {\em Analytic combinatorics in several variables, 2nd Edition}.
\newblock In press, Cambridge Studies in Advanced Mathematics. Cambridge
  University Press, Cambridge, 2024.

\bibitem[Ras12]{Raschel2012}
Kilian Raschel.
\newblock Counting walks in a quadrant: a unified approach via boundary value
  problems.
\newblock {\em J. Eur. Math. Soc. (JEMS)}, 14(3):749--777, 2012.

\bibitem[RT19]{RaschelTrotignon2019}
Kilian Raschel and Am\'{e}lie Trotignon.
\newblock On walks avoiding a quadrant.
\newblock {\em Electron. J. Combin.}, 26(3):Paper No. 3.31, 34, 2019.

\bibitem[Tro22]{Trotignon2022}
Am\'{e}lie Trotignon.
\newblock Discrete harmonic functions in the three-quarter plane.
\newblock {\em Potential Anal.}, 56(2):267--296, 2022.

\end{thebibliography}

\end{document}